\begin{document}
\title{Harmonic measure of the outer boundary of colander sets}
\author{Adi Gl\"ucksam\thanks{Supported in part by Schmidt Futures program.}}
\maketitle
\begin{abstract}
We present two companion results: Phragm\'en-Lindel\"of type tight bounds on the minimal possible growth of subharmonic functions with recurrent zero set, and tight bounds on the maximal possible decay of the 
harmonic measure of the outer boundary of colander sets.
\end{abstract}
\section{Introduction}
\begin{minipage}[c]{.49\textwidth}
Given a non-decreasing positive concave function $R(r)=o(r),\;r\rightarrow\infty$, and a non-increasing positive function $\eps:\R_+\rightarrow\bb{0,1}$ we say that a closed set $E\subset\R^d$ is {\it $(\eps,R)$-recurrent} if for every $x\in\R^d$:
\begin{eqnarray}\label{eq:recurrency}
\kib d\bb{B(x,R(\abs x))\cap E}>R\bb{\abs x}\eps\bb{\abs x},
\end{eqnarray}
where $\abs\cdot$ denotes the euclidean norm in $\R^d$, and $\kib d(A)$ denotes the capacity of the set $A$ (the Newtonain capacity for $d\ge 3$ and the logarithmic capacity for $d=2$). For the precise definition of capacity that we use in this paper, we refer the reader to chapter 5 in \cite{Heywood}. 
\end{minipage}\hfill
\begin{minipage}[c]{.47\textwidth}
\centering
\includegraphics[width=0.95\textwidth]{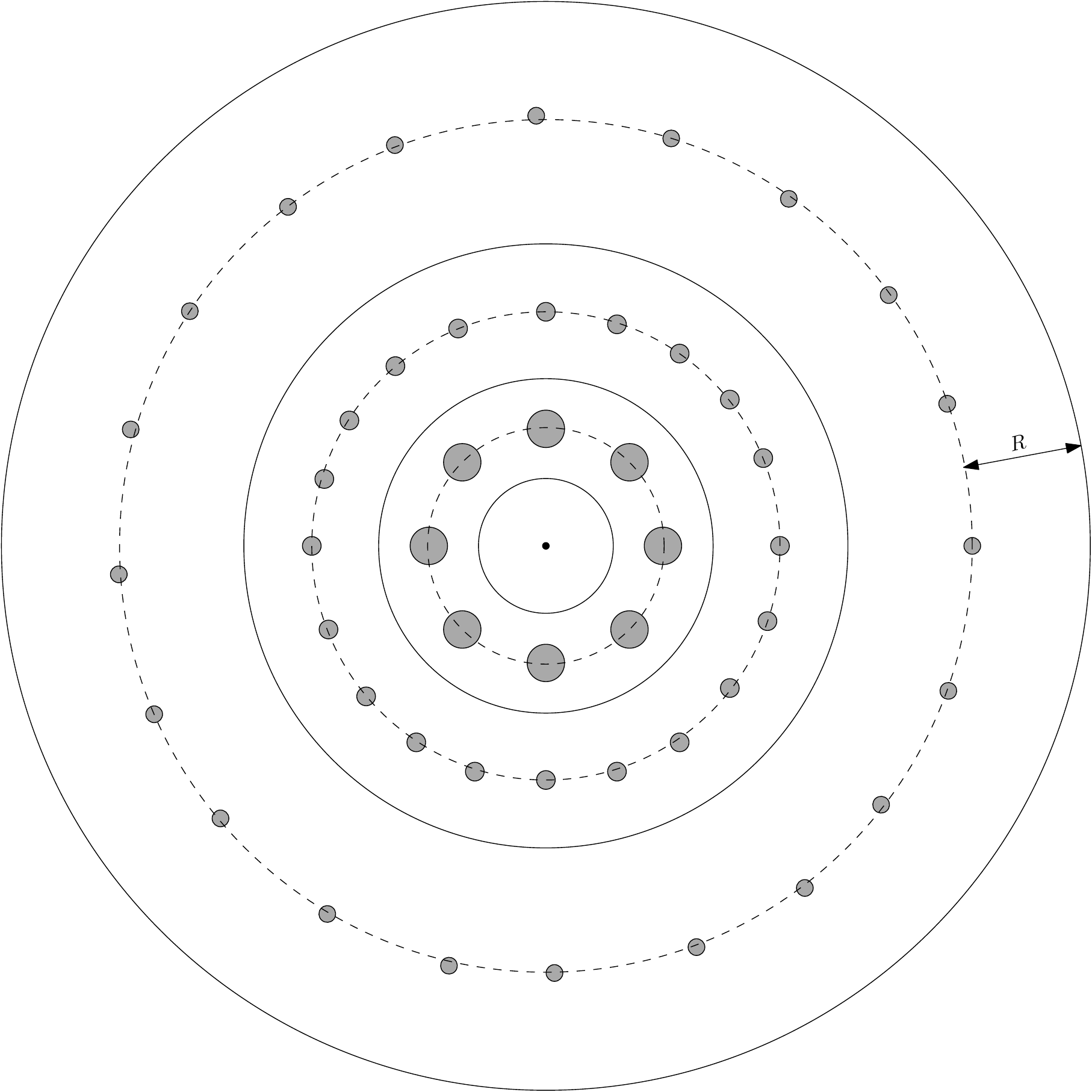}
\captionof{figure}{The function $R(t)$ determines the distribution of the set in space, the function $\eps(t)$ determines its relative size.}
\end{minipage}~\\~\\
We compare the relative capacity of the set $E$ inside the ball $B(x,R(\abs x))$ with the capacity of a ball of radius $\eps(\abs x)$, which is its radius. Condition (\ref{eq:recurrency}) can be rewritten as
$$
\frac{\kib d\bb{B(x,R(\abs x))\cap E}}{\kib d(B(x,R(\abs x)))}>\kib d(B(x,\eps(\abs x))).
$$
While the function $R$ determines the distribution of our set in space, the function $\eps$ determines the relative size of it.

We say a set $\Omega$ is {\it $(\eps,R)$-colander} if there exists $\rho>R(0)$ and an $(\eps,R)$-recurrent set $E$ so that $\Omega=B_\rho\setminus E$, where $B_\rho:=\bset{\abs x<\rho}$.\

In this paper we are considering two related objects in potential theory: the decay of the harmonic measure of the outer boundary of colander sets, and 
Phragm\'en-Lindel\"of type theorems for subharmonic functions with a recurrent zero set.

The question of the optimal growth of subharmonic functions with $(\eps,R)$-recurrent zero set, originated in a joint work with Buhovsky, Logunov and Sodin from 2017 (see \cite{Us2017}). The harmonic measure counterpart is a natural sequel.

We give accurate asymptotic estimates for the harmonic measure $\omega(0,\partial B_\rho;\Omega)$ for colander sets $\Omega$, showing that this harmonic measure decays like
$$
\exp\bb{-c\integrate 1{\rho}{\frac1{R(t)\sqrt{-\ker d(\eps(t))}}}t}\;\;, \text{ for }\;\; \ker d(t):=	\begin{cases}	
															\log (t)&, d=2\\
															\frac{-1}{t^{d-2}}&,d\ge 3
														\end{cases},
$$
while also giving precise bounds on the growth of subharmonic functions, whose zero set is recurrent showing that
$$
\log M_u(r):=\log\bb{\underset{\abs x\le r}\max\; u(x)}\sim \integrate 1{\rho}{\frac1{R(t)\sqrt{-\ker d(\eps(t))}}}t,
$$
where the notation $A\sim B$ indicates that 
$$
A\lesssim B \text{ and }B\lesssim A,
$$
and $A\lesssim B$ if there exists a constant $\alpha>0$ so that $A\le \alpha\cdot B$.

Other than being an interesting object on their own, harmonic measures arise in the context of Brownian motion, and they are useful tools in estimating the optimal growth of subharmonic functions that are bounded on sets with analytic boundary. For those reasons and more, harmonic measures have been the center of interest for many people. The matter of how small must a well distributed set be in order to be ignored has been investigated in many different contexts. One aspect is whether these sets are avoidable or not, i.e is the harmonic measure supported on this set, and assigns zero to the boundary of the disk. The question of when a set is avoidable has been investigated by many mathematicians: Akeroyd \cite{Akeroyd2002}, Carrol and Ortega-Cerd\`{a} \cite{Orthega2007}, O'Donovan \cite{Odonovan2010}, Gardiner and Ghergu \cite{Gardiner2010}, Pres \cite{Pres2012}, Hansen and Netuka \cite{Hansen2013} and more... An overturn of this question would be to ask when are such sets so small that the remaining harmonic measure, the one restricted to the boundary of the disk, is comparable with Lebesgue's measure. Questions such as these have been answered by Volberg  \cite{Volberg1991}, Ess\'en \cite{Essen1993} and by Aikawa and Lundh \cite{Aikawa1996}. Other aspects are the Hausdorff dimension of the support of harmonic measures, and their density. These aspects have been investigated throughly by \O ksendal \cite{Oksendal1981}, Bourgain \cite{Bourgain1987},  Jones and Wolf \cite{JoneWolf1988},  Jones and Makarov \cite{JoneMakarov1995}, Ortega-Cerd\`{a} and Seip \cite{Orthega2004}, and more... 
\subsection{Results}
Before stating the results, we remind the reader the definition of the capacity kernel used in \cite{Heywood}:
$$
\ker d(t):=	\begin{cases}	
		\log (t)&, d=2\\
		\frac{-1}{t^{d-2}}&,d\ge 3
		\end{cases}.
$$
We are now ready to present our results: let
$$
\varphi(t)=\varphi_{\eps,R}(t):=\frac1{R(t)\sqrt{-\ker d(\eps(t))}}.
$$
\begin{thm}\label{thm:function}
\begin{enumerate}[label=(\Alph*)]
\item\label{thm:functionA} Assume that $\limitsup t\infty\frac1{t\cdot\varphi_{\eps,R}(t)}<1$, and let $u$ be a subharmonic function in $\R^d$ so that its zero set is $(\eps,R)$-recurrent, and there exists $x_0\in\R^d$ so that $u(x_0)\ge 1$. Then
$$
\limitinf \rho\infty\frac{\log M_u(\rho)}{\integrate 1{\rho}{\varphi_{\eps,R}(t)}t}>0.
$$
\item\label{thm:functionB} If $\frac d{dt}\bb{\frac1{\varphi_{\eps,R}(t)}}$ is bounded, then there exists a subharmonic function, $u$ in $\R^d$ whose zero set is $(\eps,R)$-recurrent, while $u(0)\ge1$ and
$$
\limitsup \rho\infty\frac{\log\bb{M_u(\rho)}}{\integrate 1\rho{\varphi_{\eps,R}(t)}t}<\infty.
$$
\end{enumerate}
\end{thm}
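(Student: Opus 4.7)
My plan for part \ref{thm:functionA} is to reduce the growth bound on $u$ to a decay bound on the harmonic measure of the outer boundary of an associated colander set, and then iterate a shell estimate to obtain that decay. Let $E:=\{x\in\R^d : u(x)\le 0\}$, which is $(\eps,R)$-recurrent by hypothesis, and set $\Omega_\rho:=B_\rho\setminus E$. Since $u$ is subharmonic, vanishes on $E$, and is bounded by $M_u(\rho)$ on $\partial B_\rho$, the maximum principle yields
$$
1\le u(x_0)\le M_u(\rho)\cdot\omega(x_0,\partial B_\rho;\Omega_\rho),
$$
so part \ref{thm:functionA} is equivalent to the decay
$$
\omega(x_0,\partial B_\rho;\Omega_\rho)\le C\exp\!\left(-c\int_1^\rho\varphi_{\eps,R}(t)\,dt\right),
$$
which is precisely the companion harmonic-measure statement for colander sets announced in the introduction.

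To prove this decay I would set $h(x):=\omega(x,\partial B_\rho;\Omega_\rho)$ and track $M(r):=\sup_{|x|=r}h(x)$. The heart of the argument is a one-step shell estimate
$$
h(x)\le\left(1-c\,\eta(r)\right)\sup_{\partial B(x,R(r))}h,\qquad\eta(r):=\frac{1}{\sqrt{-\ker d(\eps(r))}},
$$
valid whenever $|x|=r$ and $B(x,R(r))\subset B_\rho$. Iterating this along the chain $r_0=|x_0|$, $r_{k+1}=r_k+R(r_k)$ out to $\partial B_\rho$ produces
$$
M(r_0)\le\prod_k\bigl(1-c\,\eta(r_k)\bigr)\le\exp\!\left(-c\sum_k\eta(r_k)\right)\le\exp\!\left(-c'\int_1^\rho\varphi_{\eps,R}(t)\,dt\right),
$$
where the last inequality is a Riemann-sum comparison using $\varphi=\eta/R$ and the regularity of $R$. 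The hypothesis $\limsup_{t\to\infty}1/(t\varphi(t))<1$ ensures the chain reaches $\partial B_\rho$ in finitely many shells of the required relative size and that each decrement stays bounded away from zero.

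The delicate point, and the one I expect to carry the main analytic weight, is proving the one-step shell estimate with the correct \emph{square-root} factor. A direct Brownian-hitting argument for an obstacle of capacity $\ge R\eps$ inside a ball of radius $R$ gives only a $1/(-\ker d(\eps))$ decrement, which is too weak by a full square root. The sharper rate should emerge from an extremal-length / Beurling-projection argument: one projects $E\cap B(x,R(r))$ radially, compares $h$ with an extremal subharmonic competitor of Phragm\'en-Lindel\"of type, and exploits that the resulting shell modulus scales with $\sqrt{-\ker d(\eps)}$ rather than linearly in the capacity kernel. This two-sided symmetrization is the genuinely new ingredient.

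For part \ref{thm:functionB}, I would construct $u$ as a superposition of elementary subharmonic ``wells'' placed on a saturating array of centers. Choose $\{a_k\}\subset\R^d$ with $|a_{k+1}|-|a_k|\asymp R(|a_k|)$ and tangential spacing $\asymp R(|a|)/|a|$ at each scale, so that $\bigcup_k B(a_k,\eps(|a_k|))$ is $(\eps,R)$-recurrent at saturation. Define
$$
u(x)=\sum_k\lambda_k\,\psi_k(x),
$$
where $\psi_k$ is a radial, non-negative subharmonic bump vanishing on $B(a_k,\eps(|a_k|))$ and coinciding outside with a truncated fundamental solution, and $\lambda_k\asymp\eta(|a_k|)$ is calibrated so that the aggregate vanishes on every well via near-cancellation between neighbors. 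The assumption that $(1/\varphi)'$ is bounded keeps the center geometry uniformly controlled across scales, and summation then yields
$$
\log M_u(\rho)\lesssim\sum_{|a_k|\le\rho}\lambda_k\lesssim\int_1^\rho\varphi_{\eps,R}(t)\,dt,
$$
matching the lower bound from part \ref{thm:functionA}. The principal verification is that $u$ is genuinely subharmonic and vanishes on each well; the same square-root extremal principle underlying the one-step lemma dictates the precise choice of the $\lambda_k$.
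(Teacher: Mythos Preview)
Your overall reduction in part (A)---from the growth of $u$ to the decay of $\omega(x_0,\partial B_\rho;\Omega_\rho)$, followed by a shell-by-shell telescoping via the maximum principle---matches the paper's strategy. The genuine gap is the mechanism you propose for the one-step decrement. You correctly observe that a single obstacle of relative capacity $\varepsilon$ in a ball yields only a $1/(-\ker d(\varepsilon))$ decrement, and you propose to upgrade this to $1/\sqrt{-\ker d(\varepsilon)}$ by Beurling projection and extremal length \emph{inside that single shell}. This cannot work: for a set $E$ that happens to consist of a single obstacle of capacity $\varepsilon R$ in $B(x,R)$, the hitting probability really is $\asymp 1/(-\ker d(\varepsilon))$, and no symmetrization improves it---the extremal configuration is already essentially radial. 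The square root is not a single-shell phenomenon at all.

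The paper obtains the square root by a \emph{multi-shell} argument. Rather than estimating the harmonic measure of $E$ in one annulus, it looks at $m_0$ consecutive layers simultaneously, with $m_0$ to be optimized. A comparison function is built as a superposition of equilibrium potentials, one per lattice cell in a neighborhood of diameter $\sim m_0 R(\rho_n)$, and careful bookkeeping (Claims~\ref{clm:approx}--\ref{clm:low_bnd}) gives
\[
\omega(A\rho_{n-1},E;D_n\setminus E)\;\gtrsim\;\frac{m_0}{-\ker d(\varepsilon(\rho_n))+m_0^{2}},
\]
which is maximized at $m_0\sim\sqrt{-\ker d(\varepsilon(\rho_n))}$. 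The hypothesis $\limsup 1/(t\varphi(t))<1$ is used not to make the chain reach $\partial B_\rho$, as you suggest, but precisely to guarantee that $R$ varies by at most a bounded factor across these $m_0$ layers (Claim~\ref{clm:R_n/R_k}), so that the lattice construction is uniform. Your single-shell iteration, even with projection, misses this optimization entirely.

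For part (B), your additive ansatz $u=\sum_k\lambda_k\psi_k$ with $\psi_k\ge 0$ cannot vanish on a well $B(a_j,\varepsilon(|a_j|))$ unless every $\psi_k$ vanishes there; ``near-cancellation between neighbors'' is impossible with non-negative summands. The paper's construction is quite different: it starts from the single radial function $v(x)=\exp\bigl(C\int_1^{|x|}\varphi\bigr)$, whose Laplacian is large by design, and then in each ball $B(\lambda,R_0(|\lambda|))$ replaces $v$ by its Poisson extension plus $A_\lambda\,\widetilde{\ker d}(|x-\lambda|/R_0)$. Subharmonicity across $\partial B(\lambda,R_0)$ holds (Claim~\ref{clm:glueing}) because $\Delta v$ dominates the normal-derivative jump contributed by $A_\lambda$, and $A_\lambda$ is then taken as large as this constraint allows, forcing $w\le 0$ on a ball of relative radius $\varepsilon_0(|\lambda|)$. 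The square root enters through $\varphi$ in the exponent of $v$, not through any weights $\lambda_k$.
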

\begin{rmk}
In the case $d=2$ any non-constant subharmonic function is unbounded, and the requirement that there exists $x_0\in\R^d$ so that $u(x_0)\ge 1$ is not needed. Never the less, if $d\ge 3$, then the function $u(x)=\frac{-1}{\abs x^{d-2}}$ is subharmonic, non-constant and $Z_u=\R^d$, making this requirements necessary.
\end{rmk}
\begin{rmk}
Though the conditions imposed in the theorem above seem harsh, they are quite weak. In fact, if the function $t\mapsto\frac1{\varphi(t)}$ is a gauge function (see definition bellow), it will satisfy both conditions.
\begin{defn}
For every $n$ we denote by $\log_{[n]}(t)$ the $n$th iterated logarithm of $t$, i.e
$$
\log_{[n]}(t):=\begin{cases}
                t&, n=0\\
                \log\bb{\log_{[n-1]}(t)}&,n\ge 1
               \end{cases}.
$$
We say a function $f$ is a \underline{gauge function} if it is monotone non-decreasing, $\underset{t\in\R}\sup\; \frac{f(t)}t<\infty$, and
$$
f(t)=A\prodit n 0 \infty \log^{\alpha_n}_{[n]}(t),
$$
where $A>0$, $\alpha_0\in[0,1],\;\alpha_n\in\R,\; n\ge 1$, and $\#\bset{n,\;\alpha_n\neq 0}<\infty.$
\end{defn}
\end{rmk}
As mentioned earlier, there are two objects at play here: subharmonic functions, and harmonic measures. The relationship between the two is more conceptional than formal, though there is also some formal connection, as we will soon see. The following theorem describes tight bounds for the decay of the harmonic measure $\omega(0,\partial B_\rho;\Omega)$ for $(\eps,R)$-colander sets $\Omega$.
\begin{thm}\label{thm:measure}
\begin{enumerate}[label=(\Alph*)]
\item\label{thm:measureA} If $\limitsup t\infty\frac1{t\cdot\varphi_{\eps,R}(t)}<1$, then there exist constants $C,c>0$ so that for every  $(\eps,R)$- recurrent set $E$, for every $\rho>1$ the $(\eps,R)$-colander set, $\Omega:=B_\rho\setminus E$ satisfies
$$
\omega\bb{0,\partial B_\rho;\Omega}\le C\exp\bb{-c\integrate 1{\rho}{\varphi_{\eps,R}(t)}t}.
$$
\item\label{thm:measureB} If $\frac d{dt}\bb{\frac1{\varphi_{\eps,R}(t)}}$ is bounded, then there exist constants $c,C>0$, and an $(\eps,R)$- recurrent set $E$, so that for every $\rho>1$ the $(\eps,R)$-colander set, $\Omega:=B_\rho\setminus E$ satisfies
$$
\omega\bb{0,\partial B_\rho;\Omega}\ge C\exp\bb{-c\integrate 1{\rho}{\varphi_{\eps,R}(t)}t}.
$$
\end{enumerate}
\end{thm}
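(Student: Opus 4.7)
For Part~\ref{thm:measureB}, the plan is a short deduction from Theorem~\ref{thm:function}\ref{thm:functionB} via a maximum-principle comparison. Let $u$ be the subharmonic function provided by Theorem~\ref{thm:function}\ref{thm:functionB}, with $(\eps,R)$-recurrent zero set $E$, $u(0)\ge 1$, and $\log M_u(\rho)\le C'\int_1^\rho\varphi_{\eps,R}(t)\,dt$. Set $\Omega:=B_\rho\setminus E$ and $\tilde u:=u/M_u(\rho)$; then $\tilde u$ is subharmonic with $\tilde u\le 1$ throughout $B_\rho$ and $\tilde u=0$ on $E$, so on $\partial\Omega=\partial B_\rho\cup(E\cap\overline{B_\rho})$ one has $\tilde u\le\omega(\cdot,\partial B_\rho;\Omega)$. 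The maximum principle applied to the subharmonic function $\tilde u-\omega(\cdot,\partial B_\rho;\Omega)$ propagates this to $\Omega$, yielding
$$\omega(0,\partial B_\rho;\Omega)\;\ge\;\tilde u(0)\;=\;\frac{u(0)}{M_u(\rho)}\;\ge\;\exp\Bigl(-C'\int_1^\rho\varphi_{\eps,R}(t)\,dt\Bigr).$$

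For Part~\ref{thm:measureA}, the plan is to iterate a local decay lemma in the spirit of Theorem~\ref{thm:function}\ref{thm:functionA}. Write $u(x):=\omega(x,\partial B_\rho;\Omega)$, harmonic and non-negative in $\Omega$, with boundary values $1$ on $\partial B_\rho$ and $0$ on $E$. Choose radii $1=r_0<r_1<\cdots<r_N=\rho$ with $r_{k+1}-r_k\asymp R(r_k)$; the hypothesis $\limsup\,1/(t\varphi_{\eps,R})<1$ translates to $R(r)\sqrt{-\ker d(\eps(r))}\lesssim r$, which ensures that these shells fit comfortably inside $B_\rho$. The core step is a local decay lemma of the shape
$$\max_{\overline{B(x_0,R(r)/4)}\cap\Omega} u\;\le\;\bigl(1-\delta(r)\bigr)\max_{\overline{B(x_0,2R(r))}\cap\Omega} u,\qquad \delta(r)\;\asymp\;\frac{1}{\sqrt{-\ker d(\eps(r))}}\;=\;R(r)\,\varphi_{\eps,R}(r),$$
valid whenever $|x_0|=r$ is large enough. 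Iterating across the shells and using $u\le 1$ yields $u(0)\le\prod_{k=0}^{N-1}(1-\delta(r_k))\le\exp(-c\sum_k\delta(r_k))$, and a Riemann sum comparison converts $\sum_k\delta(r_k)\asymp\sum_k\varphi_{\eps,R}(r_k)\cdot(r_{k+1}-r_k)$ into $\int_1^\rho\varphi_{\eps,R}(t)\,dt$, giving the claim.

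The main technical obstacle is proving the local decay lemma with the square-root rate $1/\sqrt{-\ker d(\eps(r))}$: the naive capacity-hitting-probability estimate for a target of capacity $\asymp\eps$ inside a radius-$R$ ball only yields the weaker $\delta\asymp 1/(-\ker d(\eps))$, which would produce an exponent too small by a square. To upgrade this I would mirror the capacity-weighted energy/Poincar\'e argument underlying Theorem~\ref{thm:function}\ref{thm:functionA}, applied to a suitable transform of $u$ (for instance $u^2$, which is subharmonic and vanishes on $E$, or $\log(M/u)$, which is subharmonic in $\Omega$): balancing an $L^2$-norm against a Dirichlet energy produces the exponent $1/2$ on $-\ker d(\eps)$, the same mechanism responsible for the square root in the subharmonic growth bound. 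Once the local lemma is in place the shell iteration is formally identical to its subharmonic counterpart, and the remaining items (Riemann-sum error, choice of constants, terminal shell at $r_N=\rho$) are routine.
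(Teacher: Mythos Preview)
Your Part~(B) argument is exactly the paper's: the inequality $u(0)\le M_u(\rho)\,\omega(0,\partial B_\rho;\Omega)$ is the paper's equation~($\heartsuit$), applied to the function produced by Theorem~\ref{thm:function}\ref{thm:functionB}.

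For Part~(A) your architecture also matches the paper's: a shell decomposition $r_{k+1}-r_k\asymp R(r_k)$ together with an iteration (the paper's Proposition~\ref{prop:layers} and Corollary~\ref{cor:layered_integral}) reduces everything to the single-scale bound
\[
\underset{\xi\in\partial D_{n-1}}\inf\;\omega(\xi,E;D_n\setminus E)\;\gtrsim\;\frac{1}{\sqrt{-\ker d(\eps(\rho_n))}}.
\]
Where your proposal falls short is precisely the step you flag as the main obstacle. You propose to obtain the square-root rate by ``mirroring the capacity-weighted energy/Poincar\'e argument underlying Theorem~\ref{thm:function}\ref{thm:functionA}'', but in this paper Theorem~\ref{thm:function}\ref{thm:functionA} is \emph{deduced from} Theorem~\ref{thm:measure}\ref{thm:measureA}, so there is no independent argument to mirror; and the $L^2$/Dirichlet balancing you describe remains a heuristic without a concrete implementation.

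The paper's route to the square root is different and quite explicit. One does \emph{not} try to squeeze $1/\sqrt{-\ker d(\eps)}$ out of a single ball $B(x_0,R(r))$. Instead one introduces a free integer parameter $m_0$, covers a region of radius $m_0R(\rho_n)$ near $\partial D_{n-1}$ by a grid of $\asymp m_0^d$ cubes, and builds a comparison function $u=\sum_\lambda(-p_{\mu_\lambda})$ as a sum of equilibrium potentials of $E$ restricted to each cube. With $B_n=\sup_{\partial D_n}u$ and $B_E=\sup_{\tilde E}u$, the maximum principle gives
\[
\omega(A\rho_{n-1},E;D_n\setminus E)\;\ge\;\frac{u(A\rho_{n-1})-B_n}{B_E-B_n}\;\gtrsim\;\frac{m_0}{-\ker d(\eps(\rho_n))+m_0^2},
\]
the numerator and denominator being the content of Claims~\ref{clm:low_bnd} and~\ref{clm:B_E-B_n}. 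The square root then appears by \emph{optimizing over $m_0$}: the right-hand side is maximal at $m_0\asymp\sqrt{-\ker d(\eps(\rho_n))}$. The condition $\limsup 1/(t\varphi)<1$ enters not to ``fit shells inside $B_\rho$'' but through Claim~\ref{clm:R_n/R_k}, which keeps $R(\rho_n)/R(\rho_{n-m_0})$ bounded for $m_0$ this large. The paper's own summary---that ``it is not enough to take a finite number of layers''---refers exactly to this aggregation over $\sqrt{-\ker d(\eps)}$ many shells, and that idea is what is missing from your proposal.
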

\begin{rmk}{\normalfont
Wiener's criterion for thin sets states that for $\gamma\in(1,\infty)$ and 
$$
E_n:=E\cap\bset{\gamma^{n-1}\le\abs z<\gamma^n},
$$
the set $E$ is thin at infinity if and only if the following series converges
$$
\sumit n 1 \infty\frac{\ker d(\gamma^n)}{\ker d(-\kib d(E_n))}=	\begin{cases}
												\log(\gamma)\sumit n 1 \infty\frac{n}{\log\bb{\frac1{\kib d(E_n)}}}&, d=2\\
												\sumit n 1 \infty \gamma^{n(d-2)}\kib d(E_n)^{d-2}&, d\ge 3
											\end{cases}.
$$
Heuristically, if Theorem \ref{thm:measure} \ref{thm:measureA} holds in general, without the additional condition that $\limitsup t\infty \frac1{t\varphi(t)}<1$, then the set $E$ is thin at infinity if there exists $\delta>0$ so that
$$
\varphi(t)<		\begin{cases}
				\frac1{t\log^{2+\delta}(t)}&, d=2\\
				\frac1{t^{d-2+\delta}}&, d\ge 3
			\end{cases}.
$$
This means there is still a gap between the case where our set is thin and the restriction posed in Theorem \ref{thm:measure} \ref{thm:measureA}, and moreover this gap becomes bigger 
the higher the dimension.}
\end{rmk}
\subsection{An overview of the paper: Methods and Tools}
Theorems \ref{thm:function} and \ref{thm:measure}, though conceptually equivalent, do not formally imply one another. Nevertheless, there is no need to prove both theorems:
\begin{lem}
\begin{enumerate}[label=(\roman*)]
\item If Theorem \ref{thm:measure} \ref{thm:measureA} holds then Theorem \ref{thm:function} \ref{thm:functionA} holds.
\item If Theorem \ref{thm:function} \ref{thm:functionB} holds then Theorem \ref{thm:measure} \ref{thm:measureB} holds.
\end{enumerate}
\end{lem}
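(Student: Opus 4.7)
Both implications share a single idea: I would apply the Phragm\'en--Lindel\"of / two-constants principle to the nonnegative subharmonic function $u^+=\max(u,0)$ inside the colander $\Omega=B_\rho\setminus E$. Since $u$ vanishes on the zero set $E$, so does $u^+$; and on $\partial B_\rho$ the bound $u^+\le M_u(\rho)$ is automatic. The harmonic-measure majorization then gives, for every $x\in\Omega$,
$$
u^+(x)\le M_u(\rho)\cdot\omega(x,\partial B_\rho;\Omega).
$$
The two implications amount to reading this inequality in opposite directions.

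For (ii), I would take the subharmonic function $u$ supplied by Theorem \ref{thm:function}\ref{thm:functionB}, whose zero set $E$ is $(\eps,R)$-recurrent and with $u(0)\ge 1$. Setting $x=0$ in the displayed inequality gives
$$
1\le u^+(0)\le M_u(\rho)\cdot\omega(0,\partial B_\rho;\Omega),
$$
so $\omega(0,\partial B_\rho;\Omega)\ge 1/M_u(\rho)\ge C\exp(-c\int_1^\rho \varphi_{\eps,R}(t)\,dt)$ by the hypothesis on $\log M_u(\rho)$. This is precisely Theorem \ref{thm:measure}\ref{thm:measureB}.

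For (i), the same inequality at $x=x_0$ reduces the task to bounding $\omega(x_0,\partial B_\rho;\Omega)$ from above. When $x_0=0$ this is literally Theorem \ref{thm:measure}\ref{thm:measureA}, and the conclusion $\liminf \log M_u(\rho)/\int_1^\rho\varphi>0$ drops out immediately. For general $x_0$ I would transfer the bound from center $0$ to center $x_0$: for $\rho>|x_0|$, set $\tilde\Omega=B(x_0,\rho-|x_0|)\setminus E\subset\Omega$; the maximum principle applied to $y\mapsto\omega(y,\partial B_\rho;\Omega)$ inside $\tilde\Omega$ (the function vanishes on $E$ and is $\le 1$ on $\partial B(x_0,\rho-|x_0|)$) yields
$$
\omega(x_0,\partial B_\rho;\Omega)\le\omega(x_0,\partial B(x_0,\rho-|x_0|);\tilde\Omega).
$$
I would then observe that the translate $E-x_0$ still satisfies an $(\eps,R)$-type recurrence condition with harmlessly perturbed constants, since the substitution $|y|\leftrightarrow|y-x_0|$ inside the monotone, slowly varying $R$ and $\eps$ is controlled for $|y|$ large; applying Theorem \ref{thm:measure}\ref{thm:measureA} to the shifted colander then produces the required exponential bound. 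The hypothesis $\limsup 1/(t\varphi(t))<1$ forces $\int_1^\rho \varphi\to\infty$ while $\varphi$ stays bounded on bounded intervals, so $\int_1^\rho\varphi$ and $\int_1^{\rho-|x_0|}\varphi$ differ only by $O(1)$ and the bound survives up to constants.

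\textbf{Main obstacle.} The mechanism is essentially a one-line maximum-principle inequality, and (ii) is immediate from it. The only mildly technical step is the reduction to the origin in part (i): checking that translating $E$ by the fixed vector $x_0$ preserves $(\eps,R)$-recurrence up to absorbable constants, and that $\int_1^\rho\varphi\sim\int_1^{\rho-|x_0|}\varphi$ as $\rho\to\infty$. Both follow from the slow-variation properties implicit in the standing hypotheses on $\varphi_{\eps,R}$.
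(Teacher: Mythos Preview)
Your proposal is correct and follows essentially the same route as the paper. Both parts rest on the single maximum-principle inequality $u(x_0)\le M_u(\rho)\,\omega(x_0,\partial B_\rho;B_\rho\setminus Z_u)$; part (ii) is identical to the paper's argument, and for part (i) the paper does the reduction to the origin by translating the function, setting $v(x)=u(x+x_0)$ so that $v(0)\ge 1$ and $Z_v$ is $(\tilde\eps,\tilde R)$-recurrent with $\tilde\eps(t)=\eps(t+|x_0|)$, $\tilde R(t)=R(t+|x_0|)$, rather than via your domain-inclusion step---but this is only a cosmetic difference and leads to the same change-of-variable $\int_1^\rho\varphi_{\eps,R}(t+|x_0|)\,dt$ that you identify.
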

\begin{proof}
To prove both parts of the theorem, we will use the fact that if $u$ is a subharmonic function with zero set $Z_u:=\bset{u\le 0}$, then by definition of harmonic measure, for every $\rho>1$
\[
u(0)=\underset{\partial\bb{B_\rho\setminus Z_u}}\int u(x)d\omega\bb{0,x;B_\rho\setminus Z_u}\le M_u(\rho)\cdot \omega\bb{0,\partial B_\rho;B_\rho\setminus Z_u}.\label{eq:heart}\tag{$\heartsuit$}
\]
\paragraph{The proof of {\it (i)}:} Let $u$ be a subharmonic function in $\R^d$ so that its zero set is $(\eps,R)$-recurrent, and let $x_0$ be so that $u(x_0)\ge 1$. Define the function $v(x)=u(x+x_0)$, then $v$ is a subharmonic function so that $v(0)\ge 1$, and its zero set, $Z_v$, is $(\tilde\eps,\tilde R)$-recurrent for $\tilde\eps(t)=\eps(t+\abs{x_0}),\; \tilde R(t)=R(t+\abs{x_0})$. The set $B_\rho\setminus Z_v$ is an $(\tilde\eps,\tilde R)$-colander set, and by Theorem \ref{thm:measure} \ref{thm:measureA}, there exist $0<c<C<\infty$ satisfying
$$
\omega\bb{0,\partial B_\rho;B_\rho\setminus Z_v}\le C\exp\bb{-c\integrate 1{\rho}{\varphi_{\tilde\eps,\tilde R}(t)}t},\;\;\forall\rho>1.
$$
Combining this with (\ref{eq:heart}) we get
\begin{eqnarray*}
	v(0)&\le&M_v(\rho)\cdot \omega\bb{0,\partial B_\rho;B_\rho\setminus Z_v}\le CM_v(\rho)\cdot \exp\bb{-c\integrate 1{\rho}{\varphi_{\tilde\eps,\tilde R}(t)}t},
\end{eqnarray*}
implying that 
\begin{eqnarray*}
M_v(\rho)&\ge&\frac{v(0)}C\exp\bb{c\integrate 1{\rho}{\varphi_{\tilde\eps,\tilde R}(t)}t}\ge\frac{1}C\exp\bb{c\integrate 1{\rho}{\varphi_{\eps, R}(t+\abs{x_0})}t}\\
&=&\frac{1}C\exp\bb{c\integrate {1+\abs{x_0}}{\rho+\abs{x_0}}{\varphi_{\eps, R}(t)}t}\ge\frac{\exp\bb{c\integrate 1{\rho}{\varphi_{\eps, R}(t)}t}}{C\exp\bb{c\integrate 1{1+\abs{x_0}}{\varphi_{\eps, R}(t)}t}}.
\end{eqnarray*}
We see that
$$
\limitinf \rho\infty\frac{\log M_v(\rho)}{\integrate 1{\rho}{\varphi(t)}t}=\limitinf \rho\infty\frac{\log M_u(\rho)}{\integrate 1{\rho}{\varphi(t)}t}>0.
$$
\paragraph{The proof of {\it (ii)}:} Let $u$ be the non-constant subharmonic function constructed in Theorem \ref{thm:function} \ref{thm:functionB}, whose zero set, $Z_u$, is $(\eps,R)$-recurrent, while $u(0)\ge 1$ and
$$
M_u(\rho)\le C\cdot\exp\bb{c\integrate 1\rho{\varphi(t)}t}
$$
for some constants $c,C>0$. Then, following (\ref{eq:heart}),
$$
1\le u(0)\le M_u(\rho)\cdot \omega\bb{0,\partial B_\rho;B_\rho\setminus Z_u}\le C\cdot\exp\bb{c\integrate 1\rho{\varphi(t)}t}\cdot \omega\bb{0,\partial B_\rho;B_\rho\setminus Z_u}
$$
concluding the proof.
\end{proof}
We will indeed prove Theorem \ref{thm:measure} \ref{thm:measureA}, and  Theorem \ref{thm:function} \ref{thm:functionB}. As a corollary to the lemma above, we obtain the proofs for both Theorem 
\ref{thm:function} and Theorem \ref{thm:measure}.
\subsubsection{The sketch of proof of Theorem \ref{thm:measure} \ref{thm:measureA}- Section\ref{sec:uprbnd}.}
The first key element of the proof is observing that to understand the asymptotic decay of the harmonic measure it is enough to understand its decay on a sequence of annuli, which we will call `layers'. The second key component in the proof is that the number of layers, one needs to consider in order to obtain a meaningful bound, depends on the function $\eps(t)$. Though the number of layers is not a lot (proportionally to the total number of layers), it is not enough to take a finite number of layers. This implies we do have some dependence between layers, which is quite surprising. To conclude the proof, we bound the harmonic measure of the required layers, by comparing the harmonic measure with a special subharmonic function bounding it from bellow.
\subsubsection{The sketch of proof of Theorem \ref{thm:function} \ref{thm:functionB}- Section \ref{sec:func}}
We conclude the paper with a construction of a subharmonic function whose zero set is $(\eps,R)$-recurrent, proving Theorem \ref{thm:function} \ref{thm:functionB}.\\
The idea of the construction is based on the fact that the function $z\mapsto e^{C\abs z}$ is subharmonic and its Laplacian is very big for $C\gg1$, making the measure $\Delta u\;dm$ much larger than the original function. We can use this extra growth to compensate for adding a large portion of space where $u\le 0$. Now it is only a matter of finding the appropriate candidate to replace the constant $C$, and use some glueing techniques of Poisson integrals to create these large areas where $u\le 0$.
\subsection{Acknowledgements}
The author is lexicographically grateful to Ilia Binder, Alexander Borichev, Steven Britt, John Garnett, and Mikhail Sodin for several very helpful discussions.
\section{A lower bound}\label{sec:uprbnd}
\subsection{Notation and Preliminaries}\label{subsec:not}
Given a concave differentiable monotone increasing function $R:[1,\infty)\rightarrow[1,\infty)$ so that
$$
R(t)=o(t)\,\,\text{ and }\;\;R'(0)=\underset{x\in\R_+}\sup R'(x)<1,
$$
define the sequence
$$
\rho_0:=0,\; \rho_{n+1}=\rho_n+R\bb{\rho_n}.
$$
What can be said about the oscillations of the sequence $\bset{R(\rho_n)}$?
\begin{claim}\label{clm:R_n/R_k}
Let $i(n)=\left\lfloor \sqrt{-\ker d(\eps(\rho_n))}\right\rfloor$ and define $e(\rho_n):=\frac1{\rho_n\varphi(\rho_n)}$. If $e(\rho_n)<1$ then for every $n-i(n)\le k\le n$
$$
\frac{R(\rho_n)}{R(\rho_k)}\le\frac1{1-e(\rho_n)}.
$$
\end{claim}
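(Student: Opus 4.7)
The plan is to bound $\rho_n - \rho_k$ from above in terms of $e(\rho_n)\,\rho_n$, and then invoke concavity of $R$ to convert a two-sided bound on $\rho_k/\rho_n$ into the desired bound on $R(\rho_n)/R(\rho_k)$. The key unit of accounting is the telescoping identity
\[
\rho_n-\rho_k=\sum_{j=k}^{n-1}R(\rho_j),
\]
which together with monotonicity of $R$ immediately gives $\rho_n-\rho_k\le (n-k)R(\rho_n)$.

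Next I would plug in the hypothesis on the range of $k$. Since $n-k\le i(n)\le\sqrt{-\ker_d(\eps(\rho_n))}$, the previous estimate upgrades to
\[
\rho_n-\rho_k\le i(n)\,R(\rho_n)\le R(\rho_n)\sqrt{-\ker_d(\eps(\rho_n))}=\frac{1}{\varphi_{\eps,R}(\rho_n)}=\rho_n\,e(\rho_n),
\]
where the last two equalities are just the definitions of $\varphi$ and of $e(\rho_n)$. Under the assumption $e(\rho_n)<1$ this yields the crucial lower bound
\[
\rho_k\ge \rho_n\bigl(1-e(\rho_n)\bigr)>0.
\]

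To finish I would use the fact that for a positive concave function $R$ on $[0,\infty)$, the ratio $R(t)/t$ is non-increasing (this follows from concavity together with $R(0)\ge 0$: for $0<s<t$, $R(s)\ge\tfrac{s}{t}R(t)+\bigl(1-\tfrac{s}{t}\bigr)R(0)\ge\tfrac{s}{t}R(t)$). Applying this with $s=\rho_k$ and $t=\rho_n$ gives $R(\rho_n)/\rho_n\le R(\rho_k)/\rho_k$, hence
\[
\frac{R(\rho_n)}{R(\rho_k)}\le\frac{\rho_n}{\rho_k}\le \frac{1}{1-e(\rho_n)},
\]
which is exactly the claim.

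There is no real obstacle here: the only delicate point is making sure the ``concavity implies $R(t)/t$ non-increasing'' step is justified with the regularity hypotheses in force (in particular that $R$ extends concavely to a neighborhood of $0$ with $R(0)\ge 0$, which is implicit in the setup), and making sure the index bound $n-k\le i(n)$ is applied where $R$ has actually been evaluated at the larger arguments (which is why the monotonicity step precedes the concavity step).
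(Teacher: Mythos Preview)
Your proof is correct and follows essentially the same route as the paper: both arguments bound $\rho_n-\rho_k\le i(n)R(\rho_n)\le \rho_n e(\rho_n)$ via the telescoping identity and monotonicity, then use concavity to turn $\rho_k\ge\rho_n(1-e(\rho_n))$ into the ratio bound. The only cosmetic difference is that the paper derives $R(t)/R(t(1-\eps))\le 1/(1-\eps)$ through the derivative estimate $R'(t)/R(t)\le 1/t$, whereas you obtain the equivalent statement ``$R(t)/t$ is non-increasing'' directly from the secant form of concavity with $R(0)\ge 0$.
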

\begin{proof}
Since $R$ is a monotone non-negative concave function,
\[
\label{eq:a}\tag{a} \frac{R'(t)}{R(t)}\le\frac{R'(t)}{t\cdot R'(t)+R(0)}\le \frac1t.
\]
We deduce that for every $t\ge 1$ and $\eps\in\bb{0,1}$:
\begin{align*}
1\le \frac{R(t)}{R(t\bb{1-\eps})}&\overset{\text{concavity}}\le1+\eps\cdot t\cdot \frac{R'(t\bb{1-\eps})}{R(t\bb{1-\eps})}\overset{\text{by (\ref{eq:a})}}\le1+\frac{\eps}{1-\eps}=\frac1{1-\eps}.\tag{b}\label{eq:b}
\end{align*}
Recall that $\varphi(t)=\frac1{R(t)\sqrt{-\ker d(\eps(t))}}$, therefore
\begin{align*}
\tag{c}\label{eq:c} 1-e(\rho_n)=1-\frac1{\rho_n\varphi(\rho_n)}&=1-\frac{\sqrt{-\ker d(\eps(\rho_n))} R(\rho_n)}{\rho_{n}}\le \frac{\rho_{n-i(n)}\cdot R(\rho_n)}{\rho_{n}}\\
&\le\frac{\rho_n-\sumit j 1 {i(n)} R(\rho_{n-j})}{\rho_{n}}=\frac{\rho_{n-i(n)}}{\rho_n}.
\end{align*}
We conclude that for every $n-i(n)\le k\le n$
\begin{eqnarray*}
\frac{R(\rho_n)}{R(\rho_k)}\le\frac{R(\rho_n)}{R(\rho_{n-i(n)})}=\frac{R(\rho_n)}{R\bb{\rho_n\cdot\frac{\rho_{n-i(n)}}{\rho_n}}}\overset{\text{by (\ref{eq:c})}}\le\frac{R(\rho_n)}{R\bb{\rho_n\bb{1-e(\rho_n)}}}\overset{\text{by (\ref{eq:b})}}\le \frac{1}{1-e(\rho_n)}.
\end{eqnarray*}
\end{proof}
\begin{rmk}
In fact, we only use the condition $\limitsup t\infty \frac1{t\varphi(t)}<1$ to get an upper bound on the quotient $\frac{R(\rho_n)}{R(\rho_{n-i(n)})}$. If this quotient is known to be bounded (for example if $R$ is constant), this condition is not needed at all.
\end{rmk}
\paragraph{Layers of independence}
The strong connection between harmonic measures and Brownian motion gives analysts intuition when it comes to estimating harmonic measures. In probability, the simplest case to deal with is when you have independence between events, allowing you to tightly bound the probability of each event separately to obtain a tight bound on the intersection. Would it not be wonderful if we had independence between different layers\footnote{Here and elsewhere in the paper we use {\it layer} to refer to the intersection between the set $E$ and some annulus.} of our set?\\
The strong Markov property of Brownian motion tells us that even when we do not have independence, there is some kind of relatively weak connection between layers. This was definitely known and used by experts in the field. We formally describe this "folklore" statement in the following Proposition, whose proof can be found in the appendix:
\begin{prop}\label{prop:layers}
For every closed set $V$ and for every sequence of domains $0\in D_1\Subset D_2\Subset\cdots$,
\begin{eqnarray*}
\exp\bb{-\alpha\sumit k 1 n \underset{x\in\partial D_k}\sup\;\omega\bb{x,V;D_{k+1}\setminus V}}&\le&\omega\bb{0,\partial D_{n+1};D_{n+1}\setminus V}\\
&\le&\exp\bb{-\sumit k 1 n \underset{x\in\partial D_k}\inf\;\omega\bb{x,V;D_{k+1}\setminus V}},\nonumber
\end{eqnarray*}
provided that $\alpha>1$ and
$$
\underset k\sup\;\underset{x\in\partial D_k}\sup\;\omega\bb{x,V;D_{k+1}\setminus V}\le1-\frac1\alpha.
$$
\end{prop}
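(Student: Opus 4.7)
The plan is to realize the harmonic measure as the exit distribution of Brownian motion and apply the strong Markov property one layer at a time. Let $(X_t)_{t\ge 0}$ denote Brownian motion started at $0$, and set
$$\tau_k:=\inf\{t:X_t\in\partial D_k\},\qquad \tau_V:=\inf\{t:X_t\in V\}.$$
Then $\omega(0,\partial D_{n+1};D_{n+1}\setminus V)=P(\tau_{n+1}<\tau_V)$, and since $D_k\Subset D_{k+1}$, on the event $\{\tau_{n+1}<\tau_V\}$ the path crosses the boundaries in order $\tau_1<\tau_2<\cdots<\tau_{n+1}$.

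The core step is an iterated application of the strong Markov property at each $\tau_k$: conditional on $X_{\tau_k}=x\in\partial D_k$, the probability of subsequently reaching $\partial D_{k+1}$ before hitting $V$ is $\omega(x,\partial D_{k+1};D_{k+1}\setminus V)=1-\omega(x,V;D_{k+1}\setminus V)$, because $\partial D_{k+1}$ and $V\cap\overline{D_{k+1}}$ together make up the boundary of the exit region $D_{k+1}\setminus V$. Writing $q_k:=\sup_{x\in\partial D_k}\omega(x,V;D_{k+1}\setminus V)$ and $\tilde q_k:=\inf_{x\in\partial D_k}\omega(x,V;D_{k+1}\setminus V)$, this delivers the two-sided recursion
$$(1-q_k)\,P(\tau_k<\tau_V)\;\le\;P(\tau_{k+1}<\tau_V)\;\le\;(1-\tilde q_k)\,P(\tau_k<\tau_V).$$
Iterating from $k=1$ up to $k=n$ gives
$$\prod_{k=1}^{n}(1-q_k)\cdot P(\tau_1<\tau_V)\;\le\;\omega(0,\partial D_{n+1};D_{n+1}\setminus V)\;\le\;\prod_{k=1}^{n}(1-\tilde q_k).$$

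To turn these products into the advertised exponentials I would invoke two elementary one-variable facts. First, $1-x\le e^{-x}$ on $[0,1]$ yields the upper bound immediately after dropping the trivial factor $P(\tau_1<\tau_V)\le 1$. Second, for $x\in[0,1-\tfrac1\alpha]$ one has $1-x\ge e^{-\alpha x}$; this is verified by checking that $f(x):=-\log(1-x)-\alpha x$ satisfies $f(0)=0$ and $f'(x)=\tfrac{1}{1-x}-\alpha\le 0$ on that interval. The hypothesis $\sup_k q_k\le 1-\tfrac1\alpha$ is exactly what lets me invoke this inequality term by term in the product, producing $\prod_k(1-q_k)\ge\exp\bigl(-\alpha\sum_k q_k\bigr)$ and hence the lower bound.

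The main technical wrinkle is bookkeeping rather than depth: one must confirm that the strong Markov conditioning really delivers the clean harmonic-measure factor at each scale, which requires enough regularity of $\partial D_k$ and $V$ for Brownian hitting probabilities to coincide with classical harmonic measure (up to polar exceptional sets), and one must handle the initial factor $P(\tau_1<\tau_V)$ in the lower bound, which does not appear explicitly in the claimed inequality. In the intended applications this factor is harmless: either $V\cap\overline{D_1}=\emptyset$ so that $P(\tau_1<\tau_V)=1$ automatically, or the same smallness hypothesis can be imposed on $\omega(0,V;D_1\setminus V)$ and absorbed by a mild enlargement of $\alpha$. Everything else reduces to the strong Markov property and the calculus inequality comparing $1-x$ with $e^{-\alpha x}$.
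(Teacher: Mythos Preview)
Your argument is correct and reaches the same two-sided product bound
\[
\prod_{k=1}^{n}(1-q_k)\le \omega(0,\partial D_{n+1};D_{n+1}\setminus V)\le \prod_{k=1}^{n}(1-\tilde q_k),
\]
followed by the same elementary comparison of $1-x$ with $e^{-\alpha x}$. The route, however, is genuinely different from the paper's. The paper never invokes Brownian motion: it proves a purely analytic comparison lemma (via the maximum principle) stating that for $D'\subset D$ and $F'=F\cap D'$,
\[
\inf_{y\in\partial D'}\omega(y,\partial D;D\setminus F)\le \frac{\omega(x,\partial D;D\setminus F)}{\omega(x,\partial D';D'\setminus F')}\le \sup_{y\in\partial D'}\omega(y,\partial D;D\setminus F),
\]
and then iterates this inequality down the chain $D_{n+1}\supset D_n\supset\cdots\supset D_1$. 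Your strong-Markov recursion is the probabilistic shadow of exactly this inequality. The trade-off is that the paper's version works directly with harmonic measures and needs no identification with hitting probabilities, so the regularity caveat you raise about $\partial D_k$ and $V$ simply does not arise; on the other hand your version makes transparent the ``independence between layers'' heuristic the paper alludes to. As for the leftover factor $P(\tau_1<\tau_V)=\omega(0,\partial D_1;D_1\setminus V)$ in the lower bound: you are right to flag it, and in fact the paper's recursive argument produces the same residual factor and silently drops it, so your treatment is at least as careful as the original.
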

To conclude this subsection we would like to relate the estimates done in Proposition \ref{prop:layers} with the sequence $\bset{\rho_n}$:
\begin{cor}\label{cor:layered_integral} Let $F\subset\R^d$ be a closed set and let $D_1\Subset D_2\cdots$ be a sequence of sets so that
$$
\partial D_k\subset \bset{x\in\R^d,\; \alpha\rho_k\le\abs x\le \beta\rho_k}
$$
for some uniform constants $0<\alpha<\beta<\infty$. Let $g_1,g_2:\R_+\rightarrow\R_+$ be continuous  monotone non-increasing functions satisfying that for every $k$ and every $x\in\partial D_k$:
$$
g_1(\rho_k)\le \omega\bb{x,F;D_{k+1}\setminus F}\le g_2(\rho_k)\le 1-\frac1\alpha,
$$
for some uniform constant $\alpha>1$. Then there exists constants $0<B\le A$ so that for every $n$:
$$
\exp\bb{-A\integrate 0{\rho_n}{\frac{g_2(t)}{R(t)}}t}\le \omega\bb{0,\partial D_{n+1};D_{n+1}\setminus F}\le\exp\bb{-B\integrate 0{\rho_n}{\frac{g_1(t)}{R(t)}}t}.
$$
\end{cor}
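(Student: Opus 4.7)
The plan is to combine Proposition \ref{prop:layers} with a Riemann-sum comparison that exploits the monotonicity of $g_1,g_2$ together with the fact that the partition $\{\rho_k\}$ has gaps $\rho_{k+1}-\rho_k=R(\rho_k)$ of the same order as $R$ itself on the whole interval $[\rho_k,\rho_{k+1}]$.

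First I would verify the hypothesis of Proposition \ref{prop:layers}. Since $g_2$ is non-increasing, the pointwise bound $g_2(\rho_k)\le 1-1/\alpha$ in particular implies
$\sup_k\sup_{x\in\partial D_k}\omega(x,F;D_{k+1}\setminus F)\le 1-1/\alpha$, so the proposition applies with $V=F$ and yields
\[
\exp\!\left(-\alpha\sum_{k=1}^{n}g_2(\rho_k)\right)\;\le\;\omega\!\left(0,\partial D_{n+1};D_{n+1}\setminus F\right)\;\le\;\exp\!\left(-\sum_{k=1}^{n}g_1(\rho_k)\right).
\]

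Next I would convert each sum to the advertised integral. For $t\in[\rho_k,\rho_{k+1}]$, monotonicity of $g_i$ (non-increasing) and of $R$ (non-decreasing) gives the sandwich $g_i(\rho_{k+1})/R(\rho_{k+1})\le g_i(t)/R(t)\le g_i(\rho_k)/R(\rho_k)$. Multiplying by the length $R(\rho_k)=\rho_{k+1}-\rho_k$ produces
\[
\frac{g_i(\rho_{k+1})\,R(\rho_k)}{R(\rho_{k+1})}\;\le\;\int_{\rho_k}^{\rho_{k+1}}\frac{g_i(t)}{R(t)}\,dt\;\le\;g_i(\rho_k).
\]
To turn the leftmost quantity back into $g_i(\rho_{k+1})$ I would use concavity together with $R'(0)<1$: these give $R(\rho_{k+1})\le R(\rho_k)+R'(\rho_k)\cdot R(\rho_k)\le 2R(\rho_k)$, so the ratio $R(\rho_k)/R(\rho_{k+1})$ is at least $1/2$. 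Summing over $k$ and re-indexing then delivers
\[
\frac{1}{2}\sum_{k=2}^{n+1}g_2(\rho_k)\;\le\;\int_{\rho_1}^{\rho_{n+1}}\frac{g_2(t)}{R(t)}\,dt\quad\text{and}\quad \sum_{k=1}^{n}g_1(\rho_k)\;\ge\;\int_{\rho_1}^{\rho_{n+1}}\frac{g_1(t)}{R(t)}\,dt.
\]

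Substituting these estimates into the exponentials from Proposition \ref{prop:layers}, and absorbing the bounded boundary contributions $\int_0^{\rho_1}g_i(t)/R(t)\,dt$ (finite since $R$ is bounded below on $[0,\rho_1]$ and $g_i\le 1$) and the stray term $g_2(\rho_1)$ into the multiplicative constants $A$ and $B$, yields the corollary. The only step with any content is the comparison $R(\rho_{k+1})\asymp R(\rho_k)$, which is immediate from concavity and $R'(0)<1$; the rest is routine Riemann-sum bookkeeping, so I do not anticipate a serious obstacle.
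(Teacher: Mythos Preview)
Your proposal is correct and follows essentially the same line as the paper: apply Proposition~\ref{prop:layers} to get exponential sum bounds, then convert $\sum_k g_i(\rho_k)$ into $\int g_i/R$ using that the ratio $R(\rho_{k+1})/R(\rho_k)$ is uniformly bounded (the paper writes this as $R(\rho_{k-1})/R(\rho_k)\ge 1-R'(0)=:c_R$, you as $R(\rho_{k+1})\le 2R(\rho_k)$). The only organisational difference is that the paper routes the sum-to-integral comparison through the auxiliary function $\Phi(x)=\int_0^x dt/R(t)$, shows $c_R n\le\Phi(\rho_n)\le n$, and then performs a change of variables $\tau=\Phi^{-1}(c_R t)$, whereas you compare $g_i(\rho_k)$ directly with $\int_{\rho_k}^{\rho_{k+1}} g_i/R$; your version is a bit more direct but the content is the same.
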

\begin{proof}
Following Proposition \ref{prop:layers},
$$
\exp\bb{-\alpha\sumit k 1 n \underset{\xi\in\partial D_k}\sup\;\omega\bb{\xi,F;D_{k+1}\setminus F}}\le\omega\bb{0,\partial D_n,D_n\setminus F}\le \exp\bb{-\sumit k 1 n \underset{\xi\in\partial D_k}\inf\;\omega\bb{\xi,F;D_{k+1}\setminus F}}.
$$
Combined with the assumptions of the lemma, we see that
$$
 \exp\bb{-\alpha\sumit k 1 n g_2(\rho_k)}\le \omega\bb{0,\partial D_{n+1},D_{n+1}\setminus F}\le \exp\bb{-\sumit k 1 n g_1(\rho_k)}.
$$
To get further estimates on the inequality above, we will need an estimate on $\rho\inv$. Even though we do not have a useful formula for $\rho_n$, we can obtain a formula for the asymptotic behavior of $\rho\inv$. Define the function
$$
\Phi:\R_+\rightarrow\R_+,\; \Phi(x):=\integrate 0 x {\frac1{R(t)}}t.
$$
For every $n\in\N$:
\begin{eqnarray*}
\Phi(\rho_n)=\integrate 0{\rho_n}{\frac1{R(t)}}t=\sumit k 1 n\integrate{\rho_{k-1}}{\rho_k}{\frac1{R(t)}}t.
\end{eqnarray*}
Since $R$ is monotone increasing and concave, for every $k$ 
\begin{eqnarray*}
1\ge\frac{R(\rho_{k-1})}{R(\rho_k)}=1-\frac{R(\rho_k)-R(\rho_{k-1})}{R(\rho_k)}&=&1-\frac{R'(\xi_k)\bb{\rho_k-\rho_{k-1}}}{R(\rho_k)}\\
\ge 1-\frac{\underset{x\in\R_+}\sup R'(x)R(\rho_k)}{R(\rho_k)}&=&1-\underset{x\in\R_+}\sup R'(x)=1-R'(0):=c_R>0,
\end{eqnarray*}
and therefore
$$
c_R\le\frac{R(\rho_{k-1})}{R(\rho_k)}=\frac{\rho_k-\rho_{k-1}}{R(\rho_k)}\le\integrate{\rho_{k-1}}{\rho_k}{\frac1{R(t)}}t\le\frac{\rho_k-\rho_{k-1}}{R(\rho_{k-1})}=1.
$$
Overall,
\begin{eqnarray*}
c_R\cdot n\le\Phi(\rho_n)\le n\Rightarrow \Phi\inv(c_R\cdot n)\le\rho_n\le\Phi\inv(n),
\end{eqnarray*}
since $\Phi\inv$ is monotone increasing as well. The function $g_2$ is monotone non-increasing implying that $g_2\circ\Phi\inv$ is monotone non-increasing and therefore
\begin{eqnarray*}
\sumit j 1 n g_2(\rho_k)&\le&\sumit j 1 n g_2(\Phi\inv(c_R\cdot j))\le\sumit j 1 n \integrate {j-1}{j}{g_2(\Phi\inv(c_R\cdot t))}t=\integrate 0 {n} {g_2(\Phi\inv(c_R\cdot t))}t\\
&\overset{\text{c.o.v}}=&\integrate{\Phi\inv(0)}{\Phi\inv(c_R\cdot n)}{\frac{g_2(\tau)}{c_RR(\tau)}}\tau\le\frac1{c_R}\integrate {0}{\rho_n}{\frac{g_2(\tau)}{R(\tau)}}\tau.
\end{eqnarray*}
To conclude the proof, we use similar estimates to prove the upper bound.
\end{proof}
\subsection{The proof}
In this subsection we will prove Theorem \ref{thm:measure}\ref{thm:measureA}: we show that if $\limitsup t\infty \frac1{t\varphi_{\eps,R}(t)}<1$, then there exist $c>0$ so that for every $(\eps,R)$-recurrent set $E$, the $(\eps,R)$-colander set, $\Omega=B_\rho\setminus E$, satisfies:
\[
\omega\bb{0,\partial B_\rho; \Omega}\le\exp\bb{-c\integrate 1 {\rho}{\varphi(t)}t}.\tag{$\star$}\label{eq:upr_bnd}
\]
We will first observe that it is enough to prove (\ref{eq:upr_bnd}) for the sequence $\bset{A\cdot\rho_n}$, where $A\ge 1$ is some constant. For every $\rho>A\rho_1$ there exists $n\in\N$ so that $A\rho_n\le\rho<A\rho_{n+1}$. Using the maximum principle:
\begin{eqnarray*}
\omega\bb{0,\partial B_\rho; B_\rho\setminus E}&\le&\omega\bb{0,\partial B_{\rho_n}; B_{\rho_n}\setminus E}\overset{\text{by }(\star)}\le \exp\bb{-c\integrate 1 {A\rho_n}{\varphi(t)}t}\\
&\le&\exp\bb{-\frac c2\integrate 1 {A\rho_{n+1}}{\varphi(t)}t}\le  \exp\bb{-\frac c2\integrate 1 {\rho}{\varphi(t)}t},
\end{eqnarray*}
as $\varphi$ is a positive, monotone, non-increasing function, and $A\rho_{n+1}=A\bb{\rho_n+R(\rho_n)}<2A\rho_n$.

To prove (\ref{eq:upr_bnd}) for $\rho=A\rho_n$, we will use Corollary \ref{cor:layered_integral} with the set $E$ and the domains $D_n=B(0,A\rho_n)$. The constant $A$ will depends on dimension alone, and will be chosen later. Following the corollary, it is enough to show that there exists a constant $c$ so that for every $(\eps,R)$-recurrent set $E$ and for every $n\in\N$
$$
\underset{\xi\in\partial D_{n-1}}\inf\;\omega\bb{\xi,E;D_n\setminus E}\ge \frac c{\sqrt{-\ker d(\eps(\rho_n))}}.
$$
We will bound the harmonic measure from bellow at the point $A\rho_{n-1}$. Because the condition on the set $E$ is invariant with respect to rotations, this simplifies the proof on one hand, 
while one can apply the same argument to every element in $\partial D_{n-1}$, by rotating the set and using the fact that harmonic measure is invariant under rotations.

It seems that to bound the harmonic measure at the point $A\rho_{n-1}$, it will not be enough to look at a finite neighbourhood. Never the less, we will only be interested in a relatively small neighborhood of $A\rho_{n-1}$, and the function we construct will be custom-made for this point. However, the construction is the same for any point and the constants are uniform, so the same proof can be applied to any part of $\partial D_{n-1}$ giving us a uniform lower bound.

As we are only interested in the asymptotic decay of the harmonic measure up to multiplication by constants, and the functions $\eps(\cdot),\;\R(\cdot)$ do not change by much in a small enough neighborhood of the point $A\rho_{n-1}$ (see Claim \ref{clm:R_n/R_k}), we may work with $R(\rho_n)$ and $\eps(\rho_n)$  instead of $R(t),\;\eps(t)$.

We begin by partitioning $\R^d$ into half-open half-closed cubes of edge-length $4R(\rho_n)$. For every $I=(i_1,\cdots,i_d)\in\Z^d$ we let $\lambda$ denote the centre of the cube $C_\lambda$:
$$
C_\lambda=\prodit j 1 d\left[4i_j\cdot R(\rho_n),4\bb{i_j+1}R(\rho_n)\right)=\lambda+\left[-2R(\rho_n),2R(\rho_n)\right)^d.
$$
For every $\lambda$, define $E_\lambda=E\cap \overline{B(\lambda,R(\abs{\lambda}))}$, and the sets
$$
U_{m_0}:=B\bb{0,A\rho_{n-1}-\frac{m_0R(\rho_n)}{10}}\cap B\bb{A\rho_n,m_0\cdot R(\rho_n)},\;\; \Gamma=\bset{\lambda,\;C_\lambda\subset U_{m_0}},
$$
for some $m_0\ll n$ that will be chosen later (see Figure \ref{fig:ell_j} bellow). Note that $\bset{E_\lambda}_{\lambda\in\Gamma}$ is a collection of closed pairwise disjoint sets. Moreover, if $i(n)$ is as defined in Claim \ref{clm:R_n/R_k}, and $m_0\le i(n)$, then for every $\lambda\in\Gamma$ 
$$
\mathcal C_d(E_\lambda)\ge \eps(\abs{\lambda})\cdot R(\abs{\lambda})\ge \eps(\rho_n)R(\rho_{n-m_0})\gtrsim\eps(\rho_n)R(\rho_n)
$$
according to the definition of $(\eps,R)$-recurrent set combined with Claim \ref{clm:R_n/R_k}. Define the harmonic function $u_\Gamma:\R^d\setminus\underset{\lambda\in\Gamma}\bigcup E_\lambda\rightarrow\R$ by
$$
u_{\Gamma}(x)=R(\rho_n)^{d-2}\underset{\lambda\in\Gamma}\sum -p_{\mu_\lambda}(x),
$$
where $p_{\mu_\lambda}$ is the potential of the equilibrium measure of the set $E_\lambda$ with kernel
$$
\ker d(t):=	\begin{cases}	
		\log (t)&, d=2\\
		\frac{-1}{t^{d-2}}&,d\ge 3
		\end{cases}.
$$
\subsubsection{Defining the function}
Let $e(\theta)\in S^{d-1}$ be so that $\underset{x\in\partial B(0,A\rho_n)}\sup\;u_{\Gamma}(x)=u_{\Gamma}(A\rho_ne(\theta))$, and define set
$$
\Lambda=e(-\theta)\Gamma=\bset{e(-\theta)\lambda,\;\lambda\in\Gamma}.
$$
Define the harmonic function $u:\R^d\setminus\underset{\lambda\in\Lambda}\bigcup E_\lambda\rightarrow\R$ by
$$
u(x):=u_{\Lambda}(x)=R(\rho_n)^{d-2}\underset{\lambda\in\Lambda}\sum -p_{\mu_\lambda}(x).
$$
We will often use the following notation- for every $x\in \R^d$ we let
$$
\ell_j(x):=\#\bset{\lambda\in\Lambda, \abs{x-\lambda}\in[j\cdot d\cdot R(\rho_n),(j+1)\cdot d\cdot R(\rho_n))}.
$$
Note that if $\left\lceil \frac{dist(x,\Lambda)}{R(\rho_n)}\right\rceil=\tau$, then there exists a constant, $\kappa_d$, which depends on the dimension alone so that
$$
\frac1{\kappa_d}\cdot j^{d-1}\le \ell_{j+\tau}(x)\le{\kappa_d}\cdot j^{d-1},\;\text{ for }\; \frac{m_0}{4d}\le j\le \frac{3m_0}{4d}.
$$
In fact, the right hand inequality holds for all $1\le j\le 2m_0$.
\begin{center}
\includegraphics[width=0.8\textwidth]{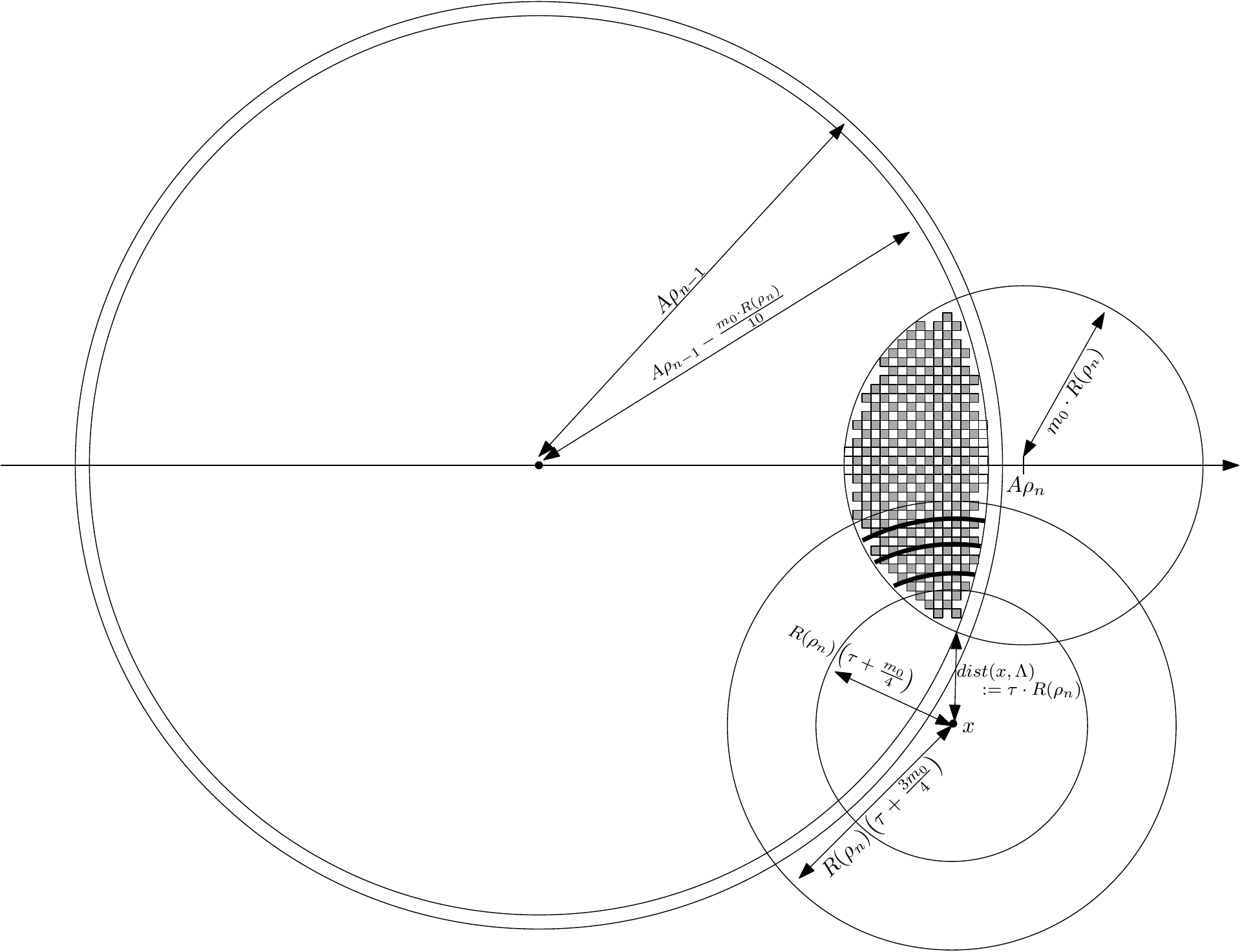}
\captionof{figure}{The grey checkered set is $U_{m_0}$.\\$\ell_j(x)$ counts every cube intersecting the thick black strips, which represent some $\frac{m_0}{4d}\le j\le \frac{3m_0}{4d}$.}
\label{fig:ell_j}
\end{center}
\begin{claim}\label{clm:approx}
Then there exists a constant, $\alpha_d$, which depends on the dimension alone, so that
$$
\abs{u(x)-R(\rho_n)^{d-2}\underset{\lambda\in\Lambda}\sum \bb{-\ker d\bb{\abs{x-\lambda}}\indic{\R^d\setminus C_\lambda}(x)-p_{\mu_\lambda}(x)\cdot\indic{C_\lambda}(x)}}\le\alpha_d\cdot m_0 \;\text{ for every } \;x\in\R^d.
$$
\end{claim}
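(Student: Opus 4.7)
The plan is to compute $u(x)-v(x)$ explicitly, isolate it as a single sum, and then bound that sum by a shell-counting argument.

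First step. By the construction of $v$, for each $\lambda\in\Lambda$ with $x\in C_\lambda$ the contribution from this index is $-p_{\mu_\lambda}(x)-(-p_{\mu_\lambda}(x))=0$, while for $x\notin C_\lambda$ it equals $\ker d\bb{\abs{x-\lambda}}-p_{\mu_\lambda}(x)$. Hence
$$
u(x)-v(x)=R(\rho_n)^{d-2}\underset{\lambda\in\Lambda,\;x\notin C_\lambda}\sum\bb{\ker d\bb{\abs{x-\lambda}}-p_{\mu_\lambda}(x)},
$$
and the task reduces to bounding this single sum uniformly in $x\in\R^d$.

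Second step. For each summand, use the fact that $\mu_\lambda$ is a probability measure supported in $\overline{B(\lambda,R(\abs\lambda))}$, whose radius is comparable to $R(\rho_n)$ by Claim \ref{clm:R_n/R_k}, since every relevant $\lambda\in\Lambda$ satisfies $\abs\lambda$ within $O(m_0 R(\rho_n))$ of $\rho_n$. Because $x\notin C_\lambda$ forces $\abs{x-\lambda}\ge 2R(\rho_n)$, the distances $\abs{x-y}$ and $\abs{x-\lambda}$ differ by at most $R(\rho_n)$ for every $y$ in the support of $\mu_\lambda$, so the two are comparable. Writing
$$
\ker d\bb{\abs{x-\lambda}}-p_{\mu_\lambda}(x)=\int\bb{\ker d\bb{\abs{x-\lambda}}-\ker d\bb{\abs{x-y}}}d\mu_\lambda(y)
$$
and applying the mean value theorem to $\ker d$ yields the uniform per-$\lambda$ estimate $\lesssim R(\rho_n)/\abs{x-\lambda}^{d-1}$, which is valid simultaneously in $d=2$ (where it reads $R(\rho_n)/\abs{x-\lambda}$) and in $d\ge3$.

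Third step. Perform a shell sum using the counts $\ell_j(x)$ introduced just above the claim. On the $j$-th shell the per-$\lambda$ bound multiplied by the prefactor $R(\rho_n)^{d-2}$ becomes $\lesssim R(\rho_n)^{d-1}/(jR(\rho_n))^{d-1}=1/j^{d-1}$, while $\ell_j(x)\le\kappa_d j^{d-1}$; the two factors cancel, so each populated shell contributes $O(1)$. Since $\Lambda\subset U_{m_0}$ has diameter $O(m_0 R(\rho_n))$, only $O(m_0)$ shells can be populated, yielding the claimed bound $\alpha_d m_0$.

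The main obstacle I anticipate is the peripheral regime where $x$ lies far from $\Lambda$, so that $\tau=\lceil\text{dist}(x,\Lambda)/R(\rho_n)\rceil$ exceeds $2m_0$ and the estimate $\ell_j\le\kappa_d j^{d-1}$ from the paper is not directly available. There I would fall back on the crude bounds $\#\Lambda\lesssim m_0^d$ and $\abs{x-\lambda}\gtrsim \tau R(\rho_n)$ to obtain
$$
R(\rho_n)^{d-1}\underset{\lambda\in\Lambda}\sum\frac1{\abs{x-\lambda}^{d-1}}\lesssim\frac{m_0^d}{\tau^{d-1}}\le m_0,
$$
so the same bound persists.
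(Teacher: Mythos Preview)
Your proof is correct and follows essentially the same route as the paper: reduce to the sum over $\lambda$ with $x\notin C_\lambda$, bound each term by $O(R(\rho_n)/\abs{x-\lambda}^{d-1})$ (the paper does this by explicit case analysis $d=2$ versus $d\ge 3$, you by the mean value theorem), and then sum over shells using $\ell_j(x)\lesssim j^{d-1}$ with at most $O(m_0)$ populated shells. Your separate treatment of the far regime $\tau>2m_0$ is unnecessary since the bound $\ell_j(x)\le\kappa_d j^{d-1}$ holds for every $j\ge 1$ by a straightforward volume count, but it does no harm.
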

\begin{proof}
Fix $x\in\R^d$ and let $\lambda\in\Lambda$ be so that $x\nin C_\lambda$. This implies that
$$
\abs{x-\lambda}>2R(\rho_n)\Rightarrow \abs{x-\lambda}-R(\rho_n)>\frac12\cdot \abs{x-\lambda}.
$$
Since $\ker d(\cdot)$ is monotone increasing, 
\begin{eqnarray*}
-p_{\mu_\lambda}(x)=\integrate{E_\lambda}{}{-\ker d\bb{\abs{x-y}}}\mu_\lambda(y)&\le& -\ker d\bb{\abs{x-\lambda}-R(\rho_n)}\\
&\ge&-\ker d\bb{\abs{x-\lambda}+R(\rho_n)}.
\end{eqnarray*}
Here our proof splits into two cases: the case where $d=2$ and the case where $d\ge 3$:
\paragraph{The case $d=2$:}
\begin{eqnarray*}
-p_{\mu_\lambda}(x)-(-\log\abs{x-\lambda})&=&\integrate{E_\lambda}{}{-\log\bb{\abs{x-y}}-\bb{-\log\abs{x-\lambda}}}\mu_\lambda(y)\\
&\le&\log\bb{\frac{\abs{x-\lambda}+R(\rho_n)}{\abs{x-\lambda}}}\le\frac{2R(\rho_n)}{\abs{x-\lambda}}\\
&\ge&\log\bb{\frac{\abs{x-\lambda}-R(\rho_n)}{\abs{x-\lambda}}}\ge-\frac{2R(\rho_n)}{\abs{x-\lambda}}.
\end{eqnarray*}
as $-2x\le\log(1-x)\le\log(1+x)\le x$ whenever $x\in\bb{0,\frac12}$.
\paragraph{The case $d\ge3$:}
\begin{eqnarray*}
-p_{\mu_\lambda}(x)&-&\bb{-\ker d\bb{\abs{x-\lambda}}}=\integrate{E_\lambda}{}{\frac1{\abs{x-y}^{d-2}}-\frac1{\abs{x-\lambda}^{d-2}}}\mu_\lambda(y)\\
&\le&\frac1{\bb{\abs{x-\lambda}-R(\abs\lambda)}^{d-2}}-\frac1{\abs{x-\lambda}^{d-2}}=\frac1{\abs{x-\lambda}^{d-2}}\bb{\bb{\frac{\abs{x-\lambda}}{\abs{x-\lambda}-R(\abs\lambda)}}^{d-2}-1}\\
&\le&\frac1{\abs{x-\lambda}^{d-2}}\bb{\bb{1+\frac{2R(\rho_n)}{\abs{x-\lambda}}}^{d-2}-1}\le \frac{2d^{3d}R(\rho_n)}{\abs{x-\lambda}^{d-1}}.
\end{eqnarray*}
Similarly,
\begin{eqnarray*}
-p_{\mu_\lambda}(x)&-&\bb{-\ker d\bb{\abs{x-\lambda}}}\ge\frac1{\bb{\abs{x-\lambda}+R(\abs\lambda)}^{d-2}}-\frac1{\abs{x-\lambda}^{d-2}}\ge -\frac{2d^{3d}R(\rho_n)}{\abs{x-\lambda}^{d-1}}.
\end{eqnarray*}

We conclude that no matter if $d=2$ or $d\ge 3$, for every $\lambda\in\Lambda$ so that $x\nin C_\lambda$
$$
\abs{-p_{\mu_\lambda}(x)-(-\ker d(\abs{x-\lambda}))}\le \frac{c_d\cdot R(\rho_n)}{\abs{x-\lambda}^{d-1}}
$$
for some uniform constant $c_d$ which depends on the dimension alone. Let $\Lambda_0=\bset{\lambda\in\Lambda,\; x\nin C_\lambda}$, then
\begin{eqnarray*}
&&\abs{u(x)-R(\rho_n)^{d-2}\underset{\lambda\in\Lambda}\sum \bb{-\ker d\bb{\abs{x-\lambda}}\indic{\R^d\setminus C_\lambda}(x)-p_{\mu_\lambda}(x)\cdot\indic{C_\lambda}(x)}}\\
&&\le R(\rho_n)^{d-2}\underset{\lambda\in\Lambda_0}\sum \abs{-p_{\mu_\lambda}(x)+\ker d\bb{\abs{x-\lambda}}}\le c_d\sumit j 2 \infty \frac{R(\rho_n)^{d-1}}{\bb{R(\rho_n)\cdot j}^{d-1}}\cdot\ell_j(x)\\
&&\le c_d\sumit j {\left\lfloor dist(x,\Lambda)\right\rfloor} {\left\lfloor dist(x,\Lambda)\right\rfloor+2m_0+1} \frac{\ell_j(x)}{j^{d-1}}\le c_d\cdot\kappa_d\sumit j {\left\lfloor dist(x,\Lambda)\right\rfloor} {\left\lfloor dist(x,\Lambda)\right\rfloor+2m_0+1} \frac{j^{d-1}}{j^{d-1}}=\alpha_d\cdot m_0,
\end{eqnarray*}
concluding the proof.
\end{proof}
\begin{cor}\label{cor:max}
Let $\alpha_d$ be the constant from Claim \ref{clm:approx}. Then
$$
0\le \underset{x\in\partial B(0,A\rho_n)}\sup\;u(x)-u(A\rho_n)\le 4\alpha_d\cdot m_0.
$$
\end{cor}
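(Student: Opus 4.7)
The lower bound is immediate once we read $u\bb{A\rho_n}$ as $u$ evaluated at the point $A\rho_n\cdot e_0\in \partial B(0,A\rho_n)$, where $e_0$ is a reference direction with respect to which $e(-\theta)$ acts as the orthogonal transformation mapping $e(\theta)\mapsto e_0$. My plan for the upper bound is to transport Claim \ref{clm:approx} between $u=u_\Lambda$ and $u_\Gamma$ using the identity $\Lambda=e(-\theta)\Gamma$, and then read off the bound from the known location of $\sup u_\Gamma$ on $\partial B(0,A\rho_n)$.

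I first verify that for $A$ large enough (depending only on the dimension) every point of $\partial B(0,A\rho_n)$ lies outside every cube $C_\lambda$ with $\lambda\in\Lambda$ and every cube $C_\mu$ with $\mu\in\Gamma$: each such cube is centered within $B\bb{0,A\rho_{n-1}-\frac{m_0R(\rho_n)}{10}}$ and has side $4R(\rho_n)$, so it sits inside $B(0,A\rho_n)$ as soon as $A$ dominates the relevant dimensional constant, in view of $\rho_n-\rho_{n-1}=R(\rho_{n-1})\sim R(\rho_n)$. Consequently the terms involving $p_{\mu_\lambda}$ in Claim \ref{clm:approx} vanish on the sphere, so Claim \ref{clm:approx} reduces on $\partial B(0,A\rho_n)$ to $\abs{u-\tilde u}\le\alpha_d m_0$ and $\abs{u_\Gamma-\tilde u_\Gamma}\le\alpha_d m_0$, where
$$\tilde u(x):=R(\rho_n)^{d-2}\underset{\lambda\in\Lambda}\sum -\ker d\bb{\abs{x-\lambda}}, \qquad \tilde u_\Gamma(y):=R(\rho_n)^{d-2}\underset{\mu\in\Gamma}\sum -\ker d\bb{\abs{y-\mu}}.$$

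Since only rotation-invariant distances appear in $\tilde u$ and $\tilde u_\Gamma$, and $\Lambda=e(-\theta)\Gamma$, the identity $\tilde u(x)=\tilde u_\Gamma\bb{e(\theta)x}$ holds for every $x\in\R^d$, and $e(\theta)$ preserves the sphere $\partial B(0,A\rho_n)$. Setting $M:=\underset{\partial B(0,A\rho_n)}\sup\,u_\Gamma=u_\Gamma\bb{A\rho_n e(\theta)}$, two applications of Claim \ref{clm:approx} give
$$u\bb{A\rho_n}\ge \tilde u\bb{A\rho_n}-\alpha_d m_0=\tilde u_\Gamma\bb{A\rho_n e(\theta)}-\alpha_d m_0\ge M-2\alpha_d m_0,$$
while for any $x^*\in\partial B(0,A\rho_n)$,
$$u(x^*)\le \tilde u(x^*)+\alpha_d m_0=\tilde u_\Gamma\bb{e(\theta)x^*}+\alpha_d m_0\le u_\Gamma\bb{e(\theta)x^*}+2\alpha_d m_0\le M+2\alpha_d m_0.$$
Subtracting these two displays yields $\underset{\partial B(0,A\rho_n)}\sup u-u\bb{A\rho_n}\le 4\alpha_d m_0$, as claimed.

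The only real obstacle is the bookkeeping: I need $\partial B(0,A\rho_n)$ to be disjoint from every $C_\lambda$ and $C_\mu$ (which forces the dimensional constraint on $A$) and I need $e(-\theta)$ to be read as an honest orthogonal transformation, so that norms, the sphere, and the pure-kernel approximant are all preserved. With those in place, the proof collapses to a two-step rotation-plus-approximation chain.
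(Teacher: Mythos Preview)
Your proof is correct and follows essentially the same route as the paper: apply Claim \ref{clm:approx} once for $u=u_\Lambda$, use the rotation identity $\tilde u(x)=\tilde u_\Gamma(e(\theta)x)$ coming from $\Lambda=e(-\theta)\Gamma$, apply Claim \ref{clm:approx} again for $u_\Gamma$, and conclude from $M=u_\Gamma(A\rho_n e(\theta))$. The paper runs this as a single chain on the difference $\sup u-u(A\rho_n)$ rather than bounding the two terms separately, but the content is identical; your extra paragraph checking that $\partial B(0,A\rho_n)$ avoids every cube $C_\lambda$ (so the indicator terms in Claim \ref{clm:approx} drop out) makes explicit something the paper leaves implicit.
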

\begin{proof}
Remember that $e(\theta)\in S^{d-1}$ was chosen so that $\underset{x\in\partial B(0,A\rho_n)}\sup\;u_{\Gamma}(x)=u_{\Gamma}(A\rho_ne(\theta))$. Then
\begin{eqnarray*}
&&\underset{x\in\partial B(0,A\rho_n)}\sup\;u(x)-u(A\rho_n)\\
&&\overset{\text{Claim \ref{clm:approx}}}\le 2\alpha_d\cdot m_0+ R(\rho_n)^{d-2}\bb{\underset{x\in\partial B(0,A\rho_n)}\sup\; \underset{\lambda\in\Lambda}\sum \bb{-\ker d\bb{\abs{x-\lambda}}-\bb{-\ker d\bb{\abs{A\rho_n-\lambda}}}}}\\
&&= 2\alpha_d\cdot m_0+R(\rho_n)^{d-2}\bb{\underset{x\in\partial B(0,A\rho_n)}\sup\; \underset{\lambda\in\Lambda}\sum \bb{-\ker d\bb{\abs{e(\theta)x-e(\theta)\lambda}}-\bb{-\ker d\bb{\abs{A\rho_ne(\theta)-e(\theta)\lambda}}}}}\\
&&=2\alpha_d\cdot m_0+R(\rho_n)^{d-2}\bb{\underset{x\in\partial B(0,A\rho_n)}\sup\; \underset{\lambda\in\Gamma}\sum \bb{-\ker d\bb{\abs{e(\theta)x-\lambda}}-\bb{-\ker d\bb{\abs{A\rho_ne(\theta)-\lambda}}}}}\\
&&\overset{\text{Claim \ref{clm:approx}}}\le4\alpha_d\cdot m_0+\underset{x\in\partial B(0,A\rho_n)}\sup\; u_\Gamma(x)-u_\Gamma(A\rho_ne(\theta))=4\alpha_d\cdot m_0,
\end{eqnarray*}
concluding the proof.
\end{proof}
\subsubsection{Bounding $B_E-B_n$}
To find suitable bounds for $u$, we will use an idea similar to the one used by Carroll and Ortega-Cerd\`a in \cite{Orthega2007}. In this paper they approximate a similar function by summing approximations of it on annuli centred at the point where the function is estimated. 

We will first need to bound the distance between the point $A\rho_n$ and the set $\Lambda$.
\begin{claim}\label{clm:max_location}
There exists a constant $M_d=M_d(A)\ge 50A$, which depends on the dimension and the constant $A$, so that if $m_0\ge M_d$, then for every $\lambda\in\Lambda$ we have $\abs{A\rho_{n-1}-\lambda}\le10m_0R(\rho_n)$.
\end{claim}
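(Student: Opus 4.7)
The plan is to decompose, via the rotation $e(-\theta)$, the distance $|\lambda - A\rho_{n-1}e_0|$ into a radial piece coming from the definition of $U_{m_0}$ and an angular piece measuring how far the maximizing direction $e(\theta)$ lies from the reference direction $e_0$ (namely, the direction in which we interpret the scalar $A\rho_{n-1}$ as the point $A\rho_{n-1}e_0$). The main obstacle is to show that this angular deviation is $O(m_0 R(\rho_n)/(A\rho_n))$, which is exactly what forces the sum of potentials $u_\Gamma$ to be concentrated near $e_0$ on the sphere $\partial B(0,A\rho_n)$.

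Writing $\lambda = e(-\theta)\mu$ with $\mu\in\Gamma$ and using that rotations are isometries, I would first note that $|\lambda - A\rho_{n-1}e_0| = |\mu - A\rho_{n-1}e(\theta)|$ after applying $e(\theta)$ to both entries. Triangulating through $A\rho_n e_0$ then yields
$$
|\lambda - A\rho_{n-1}e_0| \le |\mu - A\rho_n e_0| + AR(\rho_{n-1}) + A\rho_{n-1}\cdot |e(\theta) - e_0|,
$$
and the first term is bounded by $m_0 R(\rho_n)$ since $\mu\in U_{m_0}\subset B(A\rho_n e_0,\,m_0 R(\rho_n))$. Everything then reduces to controlling $|e(\theta) - e_0|$ by a constant multiple of $m_0 R(\rho_n)/(A\rho_n)$.

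I would establish this angular bound by contradiction. Let $\alpha$ denote the angle between $e(\theta)$ and $e_0$ and assume $\alpha > 5(m_0+1)R(\rho_n)/(A\rho_n)$. For any $\mu\in\Gamma$ and any $y\in E_\mu$, write $y = A\rho_n e_0 + v$ with $|v|\le (m_0+1)R(\rho_n)$ (combining $|\mu - A\rho_n e_0|\le m_0 R(\rho_n)$ with $|y-\mu|\le R(|\mu|)\le R(\rho_n)$). A direct expansion yields
$$
|A\rho_n e_0 - y|^2 - |A\rho_n e(\theta) - y|^2 = -2A\rho_n\cdot \langle e_0 - e(\theta), y\rangle,
$$
and using $\langle e_0 - e(\theta), e_0\rangle = 1 - \cos\alpha \ge \alpha^2/5$ (valid on $[0,\pi]$) together with $|\langle e_0 - e(\theta), v\rangle|\le \alpha(m_0+1)R(\rho_n)$, one gets $\langle e_0 - e(\theta), y\rangle \ge A\rho_n\alpha^2/5 - \alpha(m_0+1)R(\rho_n) > 0$ under the assumed lower bound on $\alpha$. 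Hence $|A\rho_n e_0 - y| < |A\rho_n e(\theta) - y|$ pointwise in $y$, so monotonicity of $-\ker d(\cdot)$ gives $-p_{\mu_\mu}(A\rho_n e_0) > -p_{\mu_\mu}(A\rho_n e(\theta))$ for every $\mu\in\Gamma$; summing over $\mu$ contradicts the definition of $e(\theta)$ as the maximizer of $u_\Gamma$ on $\partial B(0,A\rho_n)$. Plugging $|e(\theta)-e_0|\le\alpha\le 5(m_0+1)R(\rho_n)/(A\rho_n)$ back into the displayed triangle inequality and choosing $m_0\ge M_d:=50A$ (so that $AR(\rho_{n-1})\le AR(\rho_n)\le m_0 R(\rho_n)/50$) gives $|\lambda - A\rho_{n-1}e_0|\le 10 m_0 R(\rho_n)$, as required.
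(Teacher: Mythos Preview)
Your argument is correct and takes a genuinely different route from the paper's. The paper also argues by contradiction, but rather than bounding the angle between $e(\theta)$ and $e_0$ directly, it assumes some $\lambda_0\in\Lambda$ is far from $A\rho_{n-1}$, deduces (via $\mathrm{diam}(\Lambda)\le 2m_0R(\rho_n)$) that \emph{all} of $\Lambda$ lies at distance $\ge 7m_0R(\rho_n)$ from $A\rho_n$, and then invokes Claim~\ref{clm:approx} and Corollary~\ref{cor:max}: the former replaces each potential by $-\ker d(|x-\lambda|)$ up to an $O(m_0)$ error, and the latter says $u(A\rho_n)$ is within $4\alpha_d m_0$ of the spherical supremum. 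Exhibiting a competing point $A\rho_n e(\tau)$ within $m_0R(\rho_n)$ of $\Lambda$ then yields $u(A\rho_ne(\tau))-u(A\rho_n)\gtrsim m_0^2$, which beats the $O(m_0)$ near-maximality and gives the contradiction. Your pointwise comparison of $|A\rho_n e_0 - y|$ with $|A\rho_n e(\theta) - y|$ bypasses the approximation machinery entirely: monotonicity of $\ker d$ gives a strict inequality on each individual potential integrand, so $u_\Gamma(A\rho_n e_0)>u_\Gamma(A\rho_n e(\theta))$ follows immediately, contradicting the defining property of $e(\theta)$. This is cleaner and more elementary; the paper's route has the modest advantage of reusing Claim~\ref{clm:approx} and Corollary~\ref{cor:max}, which are needed elsewhere anyway.

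One small slip: you write $R(|\mu|)\le R(\rho_n)$, but $\mu\in\Gamma$ can have $|\mu|$ close to $A\rho_{n-1}$, which for $A>1$ and large $n$ exceeds $\rho_n$. Concavity salvages this easily: $R(|\mu|)\le R(A\rho_n)\le A\,R(\rho_n)$, so $|v|\le(m_0+A)R(\rho_n)$ rather than $(m_0+1)R(\rho_n)$; with this replacement the threshold for $\alpha$ becomes $5(m_0+A)R(\rho_n)/(A\rho_n)$ and the final bound becomes $(6m_0+6A)R(\rho_n)$, still well under $10m_0R(\rho_n)$ for $m_0\ge 50A$.
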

\begin{proof}
Assume that it is not true, then there exists $\lambda_0\in\Lambda$ so that $\abs{A\rho_{n-1}-\lambda_0}>10m_0R(\rho_n)$. By the triangle inequality, and since $diam(\Lambda)\le2m_0$,  for every $\lambda\in\Lambda$ we have
\begin{eqnarray*}
\abs{A\rho_{n}-\lambda}&\ge&\abs{A\rho_{n-1}-\lambda}-\bb{A\rho_n-A\rho_{n-1}}>\abs{A\rho_{n-1}-\lambda_0}-\abs{\lambda-\lambda_0}-AR(\rho_{n-1})\\
&\ge&8m_0R(\rho_n)-AR(\rho_n)\ge7m_0R(\rho_n),
\end{eqnarray*}
if $m_0>A$. Let $A\rho_ne(\tau)\in\partial D_n$ be so that $dist(A\rho_ne(\tau),\Lambda)<m_0R(\rho_n)$. Such an element exists since
\begin{eqnarray*}
dist\bb{\Lambda,\partial\bb{B\bb{0,A\rho_n}}}&=&dist\bb{\Gamma,\partial\bb{B\bb{0,A\rho_n}}}\le dist\bb{\Gamma,\partial\bb{B\bb{0,A\rho_{n-1}}}}+\bb{A\rho_n-A\rho_{n-1}}\\
&\le& R(\rho_n)\bb{\frac{m_0}{10}+10+A}<m_0R(\rho_n),
\end{eqnarray*}
for $m_0$ numerically large enough. Using Claim \ref{clm:approx}, and the fact that $\ker d(t)$ is a monotone non-decreasing function, 
\begin{eqnarray*}
u(A\rho_ne(\tau))-u(A\rho_n)&\ge& R(\rho_n)^{d-2}\underset{\lambda\in\Lambda}\sum \bb{-\ker d\bb{\abs{A\rho_ne(\tau)-\lambda}}-\bb{-\ker d\bb{\abs{A\rho_n-\lambda}}}}-2\alpha_d\cdot m_0\\
&\ge&R(\rho_n)^{d-2}\underset{\lambda\in\Lambda}\sum \bb{\ker d\bb{7m_0R(\rho_n)}-\ker d\bb{3m_0R(\rho_n)}}-2\alpha_d\cdot m_0\\
&\ge&\beta_d\cdot m_0^2-2\alpha_d\cdot m_0=\beta_d\cdot m_0^2\bb{1-\frac{\alpha_d}{\beta_d\cdot m_0}}\ge\frac{\beta_d}2\cdot m_0^2,
\end{eqnarray*}
for some constant $\beta_d>0$, as long as $m_0$ is numerically large enough. Then, following Corollary \ref{cor:max},
\begin{eqnarray*}
\underset{x\in\partial B(0,A\rho_n)}\sup\;u(x)&\le& u(A\rho_n)+4\alpha_d\cdot m_0\le u(A\rho_ne(\tau))-\frac{\beta_d}2\cdot m_0^2+4\alpha_d\cdot m_0\\
&<&u(A\rho_ne(\tau))-\frac{\beta_d}4\cdot m_0^2<\underset{x\in\partial B(0,A\rho_n)}\sup\;u(x)-\frac{\beta_d}4\cdot m_0^2
\end{eqnarray*}
as long as $m_0$ is large enough. We arrive at a contradiction concluding the proof.
\end{proof}
\begin{claim}\label{clm:B_E-B_n}
There exists a constant $\gamma_d$ which depends on the dimension alone, so that if $m_0\ge 50A$, then
$$
B_E-B_n\le\gamma_d\bb{-\ker d\bb{\eps(\rho_n)}+m_0^2}.
$$
\end{claim}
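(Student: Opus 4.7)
The plan is to estimate $B_E := \sup_{x \in E} u(x)$ and $B_n := u(A\rho_n)$ separately via the kernel approximation of Claim \ref{clm:approx}, and then subtract. Let $y_\star \in E_{\lambda_\star}$ realise (up to negligible error) the supremum $B_E$. The key geometric observation is that $y_\star$ belongs to exactly one cube $C_{\lambda_\star}$, whereas $A\rho_n$ lies outside every $C_\lambda$. Thus Claim \ref{clm:approx} gives the upper bound
\[
B_E \le R(\rho_n)^{d-2}\bigl[-p_{\mu_{\lambda_\star}}(y_\star)\bigr] + R(\rho_n)^{d-2} \sum_{\lambda \in \Lambda,\, \lambda \ne \lambda_\star}\bigl[-\ker d(|y_\star - \lambda|)\bigr] + \alpha_d m_0,
\]
and the lower bound
\[
B_n \ge R(\rho_n)^{d-2}\sum_{\lambda \in \Lambda}\bigl[-\ker d(|A\rho_n - \lambda|)\bigr] - \alpha_d m_0.
\]

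The self-energy is handled by the identity $-p_{\mu_{\lambda_\star}}(y_\star) = -\ker d\bigl(\mathcal C_d(E_{\lambda_\star})\bigr)$ (quasi-everywhere on $E_{\lambda_\star}$, which suffices by regularisation). The $(\eps,R)$-recurrence of $E$ combined with Claim \ref{clm:R_n/R_k} yields $\mathcal C_d(E_{\lambda_\star}) \gtrsim \eps(\rho_n) R(\rho_n)$, and a short case-by-case computation gives $R(\rho_n)^{d-2} \cdot [-\ker d(\mathcal C_d(E_{\lambda_\star}))] \le \tilde c_d \cdot [-\ker d(\eps(\rho_n))] + O(1)$. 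Specifically, for $d \ge 3$ one uses $R^{d-2}/\mathcal C_d^{d-2} \le (c\eps)^{-(d-2)}$; for $d = 2$, one uses $-\log(c\eps R) = -\log\eps - \log R + O(1) \le -\log\eps + O(1)$, exploiting $R(\rho_n) \ge 1$ to discard the unwanted $\log R$ factor.

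After subtracting $B_n$ one is left to bound
\[
\mathcal S := R(\rho_n)^{d-2}\sum_{\lambda \ne \lambda_\star}\bigl[-\ker d(|y_\star - \lambda|) + \ker d(|A\rho_n - \lambda|)\bigr] + R(\rho_n)^{d-2}\ker d(|A\rho_n - \lambda_\star|)
\]
by $O(m_0^2)$. The second, isolated term is $O(1)$ by Claim \ref{clm:max_location} (which gives $|A\rho_n - \lambda_\star| \asymp m_0 R(\rho_n)$). For the main sum, one partitions $\Lambda \setminus \{\lambda_\star\}$ into annular shells around $y_\star$ of radius $jdR(\rho_n)$, using the counting bound $\ell_j(y_\star) \le \kappa_d j^{d-1}$ and noting $j \le Cm_0$ since $\mathrm{diam}(\Lambda) \le Cm_0 R(\rho_n)$. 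For $d \ge 3$, $\ker d(|A\rho_n - \lambda|) \le 0$, so one drops the subtracted sum and bounds what remains by $R(\rho_n)^{d-2}\sum_j \ell_j(y_\star)/(jR(\rho_n))^{d-2} \le \kappa_d \sum_{j=1}^{Cm_0} j = O(m_0^2)$. For $d = 2$, one pairs each term using the triangle inequality $|A\rho_n - \lambda| \le (A + 10m_0)R(\rho_n) + |y_\star - \lambda|$; the summand on annulus $j$ is then at most $\log(1 + Cm_0/j)$, and $\sum_{j=1}^{Cm_0} \kappa_2 j \log(1 + m_0/j) = O(m_0^2)$ by a standard integral comparison.

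The main obstacle lies in dimension two: the logarithmic kernel has no uniform sign, so (unlike the Newtonian setting) one cannot discard the cross-sum from $u(A\rho_n)$ as a non-negative quantity. Instead, the two sums must be paired term-by-term before being estimated, in order to ensure that the large scale factor $\log R(\rho_n)$ -- which would otherwise appear in both sums and diverge independently of $\eps$ and $m_0$ -- cancels within each summand, leaving only the dimensionless ratio $|A\rho_n - \lambda|/|y_\star - \lambda|$. This pairwise comparison is essentially the annular potential technique of Carroll and Ortega-Cerd\`a.
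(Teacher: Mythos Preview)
Your outline is correct and closely parallels the paper's, but there is one small gap in the $d=2$ case, and the cross--sum is treated by a different (dimension--split) method.

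\textbf{The gap.} You assert that the isolated term $R(\rho_n)^{d-2}\ker d(|A\rho_n-\lambda_\star|)$ is $O(1)$. For $d\ge 3$ this is fine (it is $\asymp -m_0^{-(d-2)}$), but for $d=2$ it equals $\log|A\rho_n-\lambda_\star|\le \log\bigl(Cm_0 R(\rho_n)\bigr)=\log m_0+\log R(\rho_n)+O(1)$, and $\log R(\rho_n)$ is in general unbounded. The repair is not to ``discard'' the $-\log R(\rho_n)$ in your self--energy estimate but to keep it and let it cancel the $+\log R(\rho_n)$ from the isolated term, leaving $-\log\eps(\rho_n)+\log m_0+O(1)$. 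This is precisely the $\log R$ cancellation you correctly flag as essential in your final paragraph for the paired sum over $\lambda\ne\lambda_\star$; you simply omitted it for $\lambda=\lambda_\star$. The paper keeps the self term and the $\lambda_\star$--term together from the start as $-\ker d(R(\rho_n)\eps(\rho_n))+\ker d(|A\rho_n-\lambda_\star|)$, so the cancellation is automatic.

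\textbf{The cross--sum.} Your dimension--split argument (drop the non-positive $\ker d(|A\rho_n-\lambda|)$ for $d\ge 3$; bound $\log$--ratios and sum $\sum j\log(1+m_0/j)$ for $d=2$) is valid, but the paper handles all dimensions at once via the mean value theorem: writing $\ker d(|A\rho_n-\lambda|)-\ker d(|x-\lambda|)=\ker d'(s_\lambda)\bigl(|A\rho_n-\lambda|-|x-\lambda|\bigr)$, using $|A\rho_n-x|\le 12m_0R(\rho_n)$ from Claim~\ref{clm:max_location} and $|\ker d'(s_\lambda)|\lesssim |x-\lambda|^{-(d-1)}$, and then summing $R(\rho_n)^{d-1}\sum_\lambda |x-\lambda|^{-(d-1)}\lesssim \sum_{j\le 2m_0}\ell_j(x)/j^{d-1}\lesssim m_0$. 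This yields $O(m_0)\cdot O(m_0)=O(m_0^2)$ without a case distinction or an integral comparison.

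A minor remark: in the paper $B_n=\sup_{\partial B(0,A\rho_n)}u$, not $u(A\rho_n)$; since $B_n\ge u(A\rho_n)$ your bound on $B_E-u(A\rho_n)$ is the stronger statement, and the paper bridges the two via Corollary~\ref{cor:max} at a cost of $4\alpha_d m_0$.
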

\begin{proof}
Let $x\in E_\nu$ for some $\nu\in\Lambda$. Then, by Claim \ref{clm:max_location}
\[
\abs{A\rho_n-x}\le \abs{A\rho_{n-1}-\nu}+R(\rho_n)+\bb{A\rho_n-A\rho_{n-1}}\le 10m_0R(\rho_n)+R(\rho_n)+AR(\rho_n)\le 12m_0R(\rho_n),\label{eq:A_rho_n-x}\tag{$\clubsuit$}
\]
provided that $m_0>A$. Following Frostman's theorem,
\begin{eqnarray*}
u(x)-u(A\rho_n)&\overset{\text{Claim \ref{clm:approx}}} \le& R(\rho_n)^{d-2}\bb{-\ker d\bb{R(\rho_n)\eps(\rho_n)}+\ker d\bb{\abs{A\rho_n-\nu}}}\\
&&+R(\rho_n)^{d-2}\underset{\lambda\in\Lambda\setminus\bset\nu}\sum\bb{-\ker d\bb{\abs{x-\lambda}}-\bb{-\ker d\bb{\abs{A\rho_n-\lambda}}}}+2\alpha_d\cdot m_0\\
&&\overset{\text{by (\ref{eq:A_rho_n-x})}}\le-\ker d\bb{\eps(\rho_n)}+\ker d\bb{12m_0R(\rho_n)}+2\alpha_d\cdot m_0\\
&&+R(\rho_n)^{d-2}\underset{\lambda\in\Lambda\setminus\bset\nu}\sum\frac d{ds}\ker d(s_\lambda)\bb{\abs{A\rho_n-\lambda}-\abs{x-\lambda}}.
\end{eqnarray*}
To bound the sum, note that for every $\lambda\in\Lambda\subset B\bb{0,A\rho_{n-1}-\frac{m_0R(\rho_n)}{10}}$
$$
\abs{A\rho_n-\lambda}\ge \abs{A\rho_{n-1}-\lambda}-\bb{A\rho_n-A\rho_{n-1}}\ge \frac{m_0R(\rho_n)}{10}-AR(\rho_n)\ge\frac{\abs{x-\lambda}}{25},
$$
if $m_0\ge 50A$, implying that
$$
s_\lambda\ge \min\bset{\abs{A\rho_n-\lambda},\abs{x-\lambda}}\ge\frac{\abs{x-\lambda}}{25}.
$$
Then
\begin{eqnarray*}
&&\hspace{-2em}R(\rho_n)^{d-2}\underset{\lambda\in\Lambda\setminus\bset\nu}\sum\frac d{ds}\ker d(s_\lambda)\bb{\abs{A\rho_n-\lambda}-\abs{x-\lambda}}\le R(\rho_n)^{d-2}\underset{\lambda\in\Lambda\setminus\bset\nu}\sum\frac {\max\bset{1,d-2}}{s_\lambda^{d-1}}\cdot\abs{A\rho_n-x}\\
&\overset{\text{by (\ref{eq:A_rho_n-x})}}\le& 12m_0R(\rho_n)^{d-1}\underset{\lambda\in\Lambda\setminus\bset\nu}\sum\frac {25d}{\abs{x-\lambda}}\le 300m_0\cdot d\cdot R(\rho_n)^{d-1}\sumit j 1 {2m_0}\frac {\ell_j(x)}{\bb{j\cdot R(\rho_n)}^{d-1}}\\
&\le& 300\cdot m_0\cdot d\cdot\kappa_d\sumit j 1 {2m_0}1=600d\cdot\kappa_dm_0^2.
\end{eqnarray*}
Overall, using Corollary \ref{cor:max}, we get that 
\begin{eqnarray*}
&&\hspace{-2em}B_E-B_n\le \underset{x\in\underset{\lambda\in\Lambda}\bigcup E_\lambda}\sup\; u(x)-\underset{y\in\partial B(0,A\rho_n)}\sup\; u(y)\le\underset{x\in\underset{\lambda\in\Lambda}\bigcup E_\lambda}\sup\; u(x)-u(A\rho_n)+4\alpha_d\cdot m_0\\
&&\hspace{-2em}\le-\ker d\bb{\eps(\rho_n)}+\ker d\bb{12m_0R(\rho_n)}+2\alpha_d\cdot m_0+600d\cdot\kappa_dm_0^2+4\alpha_d\cdot m_0\le\gamma_d\bb{-\ker d\bb{\eps(\rho_n)}+m_0^2},
\end{eqnarray*}
as needed.
\end{proof}
\subsubsection{Bounding the harmonic measure}
Let $\tilde E=\underset{\lambda\in\Lambda}\bigcup E_\lambda$. Following the maximum principle
$$
\omega(A\rho_{n-1},\tilde E,D_n\setminus\tilde E)\le\omega(A\rho_{n-1}, E,D_n\setminus E).
$$
On the other hand, by the same principle, for every $x\in D_n\setminus\tilde E$ we know that
$$
\frac{u(x)-B_n}{B_E-B_n}\le\omega(x,\tilde E,D_n\setminus\tilde E).
$$
To conclude the proof, it is left to bound $u(A\rho_{n-1})-B_n$ from bellow.
\begin{claim}\label{clm:low_bnd}
If $A$ is chosen numerically large enough, then there exists a constant $\chi_d>0$ which depends on the dimension alone, so that
$$
u(A\rho_{n-1})-B_n\ge \chi_d\cdot m_0.
$$
\end{claim}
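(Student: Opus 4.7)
The plan is to use Corollary \ref{cor:max} to swap $B_n$ for $u(A\rho_n)$ at a cost of $4\alpha_d m_0$, and then Claim \ref{clm:approx} to replace $u(A\rho_{n-1})-u(A\rho_n)$ by the clean kernel sum
$$T := R(\rho_n)^{d-2}\sum_{\lambda\in\Lambda}\bigl[\ker d(|A\rho_n-\lambda|)-\ker d(|A\rho_{n-1}-\lambda|)\bigr]$$
at an additional cost of $2\alpha_d m_0$. Both evaluation points have norm strictly larger than any point of $\bigcup_{\lambda\in\Lambda} C_\lambda$ (since $|\lambda|\le A\rho_{n-1}-m_0R(\rho_n)/10$ forces $C_\lambda\subset B(0,A\rho_{n-1})$ for $m_0$ numerically large), so the cube-indicator terms in Claim \ref{clm:approx} vanish at $A\rho_{n-1}$ and $A\rho_n$. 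The task thus reduces to proving $T\ge(\chi_d+6\alpha_d)m_0$.

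The geometric key is that the shift $A\rho_n-A\rho_{n-1}=AR(\rho_{n-1})e_1$, where $e_1$ is the common unit direction of these two points, moves \emph{away} from every $\lambda\in\Lambda$. Since the rotation $e(-\theta)$ preserves norms, the inclusion $\Gamma\subset B(0,A\rho_{n-1}-m_0R(\rho_n)/10)$ transfers to $\Lambda$, so $|\lambda|\le A\rho_{n-1}-m_0R(\rho_n)/10$, and Cauchy--Schwarz yields $\langle A\rho_{n-1}-\lambda,e_1\rangle\ge A\rho_{n-1}-|\lambda|\ge m_0R(\rho_n)/10$ for all $\lambda\in\Lambda$. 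Expanding the square gives
$$|A\rho_n-\lambda|^2-|A\rho_{n-1}-\lambda|^2 = 2AR(\rho_{n-1})\langle A\rho_{n-1}-\lambda,e_1\rangle + A^2R(\rho_{n-1})^2 \ge \tfrac{1}{5}AR(\rho_{n-1})m_0R(\rho_n),$$
which together with the upper bound $|A\rho_{n-1}-\lambda|\le 10m_0R(\rho_n)$ from Claim \ref{clm:max_location} shows that $|A\rho_n-\lambda|$ and $|A\rho_{n-1}-\lambda|$ are comparable and both of order $kR(\rho_n)$ whenever $|A\rho_{n-1}-\lambda|\in[kR(\rho_n),(k+1)R(\rho_n))$.

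From here the mean value theorem applied to $\ker d$ produces a per-$\lambda$ lower bound of order $AR(\rho_{n-1})m_0/(k^dR(\rho_n)^{d-1})$ when $d\ge 3$, and of order $AR(\rho_{n-1})m_0/(k^2R(\rho_n))$ when $d=2$; after multiplying by $R(\rho_n)^{d-2}$ and restricting the sum to the middle band $k\in[\tau+m_0/(4d),\,\tau+3m_0/(4d)]$, where $\tau:=\lceil\mathrm{dist}(A\rho_{n-1},\Lambda)/R(\rho_n)\rceil$ satisfies $\tau\asymp m_0$ and where the shell-counting lower bound $\ell_k\ge\kappa_d^{-1}(k-\tau)^{d-1}$ is available, each shell contributes $\asymp c_R A$ (using $R(\rho_{n-1})/R(\rho_n)\ge c_R=1-R'(0)$), and the $\asymp m_0/d$ shells in the band assemble into $T\ge c'_d c_R A m_0$. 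Choosing $A$ numerically large enough (as a function of $d$) to dominate $6\alpha_d$ leaves the claim with some definite $\chi_d>0$. The only delicate point is the geometric inequality $\langle A\rho_{n-1}-\lambda,e_1\rangle\ge m_0R(\rho_n)/10$: the offset $-m_0R(\rho_n)/10$ was baked into $U_{m_0}$ precisely so that norm-preservation of $e(-\theta)$ keeps $\Lambda$ on the near side of $A\rho_{n-1}$ regardless of which direction the rotation sends $e_1$.
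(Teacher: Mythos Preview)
Your proposal is correct and follows essentially the same route as the paper: reduce via Corollary~\ref{cor:max} and Claim~\ref{clm:approx} to the kernel sum $T$, use the norm bound $|\lambda|\le A\rho_{n-1}-m_0R(\rho_n)/10$ (preserved under the rotation $e(-\theta)$) to get $\langle A\rho_{n-1}-\lambda,e_1\rangle\ge m_0R(\rho_n)/10$, invoke Claim~\ref{clm:max_location} for the upper distance bound, apply the mean value theorem to $\ker d$, and sum over a middle band of shells to extract a factor $\asymp A\,m_0$ that beats $6\alpha_d m_0$ for $A$ large. The only cosmetic differences are that the paper centers its shell count $\ell_j$ at $A\rho_n$ rather than $A\rho_{n-1}$ and keeps the factor $(A\rho_{n-1}-\lambda_1)/|A\rho_{n-1}-\lambda|\ge 1/100$ explicit instead of pre-bounding the squared-distance increment by $\tfrac{1}{5}AR(\rho_{n-1})m_0R(\rho_n)$ as you do.
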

\begin{proof}
Following Corollary \ref{cor:max}, we know that
$$
u(A\rho_{n-1})-B_n\ge u(A\rho_{n-1})-u(A\rho_n)-4\alpha_dm_0.
$$
Next, using Claim \ref{clm:approx} it is enough to bound from bellow 
$$
R(\rho_n)^{d-2}\underset{\lambda\in\Lambda}\sum\bb{-\ker d\bb{\abs{A\rho_{n-1}-\lambda}}-\bb{-\ker d\bb{\abs{A\rho_n-\lambda}}}}.
$$
Now, for every $\lambda\in\Lambda$, 
\begin{eqnarray*}
&&\hspace{-2em}\abs{A\rho_n-\lambda}^2=\bb{A\rho_n-\lambda_1}^2+\sumit j 2 d \lambda_j^2=\bb{A\rho_{n-1}-\lambda_1}^2+\sumit j 2 d \lambda_j^2+\bb{A\rho_{n}-\lambda_1}^2-\bb{A\rho_{n-1}-\lambda_1}^2\\
&&\hspace{-2em}\ge\abs{A\rho_{n-1}-\lambda}^2+AR(\rho_{n-1})\bb{A\rho_n+A\rho_{n-1}-2\lambda_1}\ge \abs{A\rho_{n-1}-\lambda}^2+2AR(\rho_{n-1})\bb{A\rho_{n-1}-\lambda_1}.
\end{eqnarray*}
Using the mean-value theorem
\begin{eqnarray*}
R(\rho_n)^{d-2}&&\hspace{-1.5em}\underset{\lambda\in\Lambda}\sum\bb{-\ker d\bb{\abs{A\rho_{n-1}-\lambda}}-\bb{-\ker d\bb{\abs{A\rho_n-\lambda}}}}\\
=R(\rho_n)^{d-2}&&\hspace{-1.5em}\underset{\lambda\in\Lambda}\sum\frac d{ds} \ker d(s_\lambda)\bb{\abs{A\rho_n-\lambda}-\abs{A\rho_{n-1}-\lambda}}\\
\ge R(\rho_n)^{d-2}&&\hspace{-1.5em}\underset{\lambda\in\Lambda}\sum\frac 1{\abs{A\rho_n-\lambda}^{d-1}}\cdot\frac{\abs{A\rho_n-\lambda}^2-\abs{A\rho_{n-1}-\lambda}^2}{\abs{A\rho_n-\lambda}+\abs{A\rho_{n-1}-\lambda}}\ge \underset{\lambda\in\Lambda}\sum\frac {AR(\rho_n)^{d-1}}{\abs{A\rho_n-\lambda}^{d-1}}\cdot\frac{A\rho_{n-1}-\lambda_1}{\abs{A\rho_{n-1}-\lambda}},
\end{eqnarray*}
following the calculation above. Next, since $\lambda\in B\bb{0,A\rho_{n-1}-\frac{m_0R(\rho_n)}{10}}$, then 
$$
A\rho_{n-1}-\lambda_1\ge A\rho_{n-1}-\abs\lambda> A\rho_{n-1}-\bb{A\rho_{n-1}-\frac{m_0R(\rho_n)}{10}}= \frac{m_0R(\rho_n)}{10},
$$
while following Claim \ref{clm:max_location}, for every $\lambda\in\Lambda$ we have $\abs{A\varrho_{n-1}-\lambda}\le 10m_0R(\rho_n)$. We conclude that
\begin{eqnarray*}
R(\rho_n)^{d-2}&&\hspace{-1.5em}\underset{\lambda\in\Lambda}\sum\bb{-\ker d\bb{\abs{A\rho_{n-1}-\lambda}}-\bb{-\ker d\bb{\abs{A\rho_n-\lambda}}}}
\ge\cdots\ge\underset{\lambda\in\Lambda}\sum\frac {AR(\rho_n)^{d-1}}{\abs{A\rho_n-\lambda}^{d-1}}\cdot\frac{A\rho_{n-1}-\lambda_1}{\abs{A\rho_{n-1}-\lambda}}\\
&\ge&\underset{\lambda\in\Lambda}\sum\frac {AR(\rho_n)^{d-1}}{\abs{A\rho_n-\lambda}^{d-1}}\cdot\frac{\frac{m_0R(\rho_n)}{10}}{10m_0R(\rho_n)}
\ge \frac A{100}\sumit j {\frac{m_0}{4d}}{\frac{3m_0}{4d}}\frac {R(\rho_n)^{d-1}\ell_j(A\rho_n)}{\bb{(j+1)d\cdot R(\rho_n)}^{d-1}}\\
&\ge&\frac A{d^d\cdot 100\kappa_d}\sumit j {\frac{m_0}{4d}}{\frac{3m_0}{4d}}\bb{\frac j{j+1}}^{d-1}\ge m_0\cdot\frac A{(2d)^d\cdot100\kappa_d}.
\end{eqnarray*}
Combining everything together we get that\begin{eqnarray*}
u(A\rho_{n-1})-B_n&\ge& u(A\rho_{n-1})-u(A\rho_n)-4\alpha_dm_0\\
&\ge& R(\rho_n)^{d-2}\underset{\lambda\in\Lambda}\sum\bb{-\ker d\bb{\abs{A\rho_{n-1}-\lambda}}-\bb{-\ker d\bb{\abs{A\rho_n-\lambda}}}}-6\alpha_dm_0\\
&\ge& m_0\cdot\frac A{(2d)^d\cdot100\kappa_d}-6\alpha_dm_0= m_0\bb{\frac A{(2d)^d\cdot100\kappa_d}-6\alpha_d}\ge\chi_d\cdot m_0,
\end{eqnarray*}
for some $\chi_d>0$ which depends on the dimension alone, as long as $A$ is chosen large enough.
\end{proof}

Following Claim \ref{clm:low_bnd},
$$
\omega(A\rho_{n-1}, E,D_n\setminus E)\ge \omega(A\rho_{n-1},\tilde E,D_n\setminus\tilde E)\ge \frac{u(A\rho_{n-1})-B_n}{B_E-B_n}\ge \frac{\chi_d\cdot m_0}{\gamma_d\bb{-\ker d(\eps(\rho_n))+ m_0^2}}.
$$
To choose the parameters correctly, note that $\alpha_d$ is a numerical constant which depends on the dimension alone. Next, we choose $A$ so that Claim \ref{clm:low_bnd} holds. The constant $A$ determines both $\chi_d$ and the lower bound $M_d$ appearing in Claim \ref{clm:max_location}. To conclude the proof, let $i(n)$ be the term appearing in Claim \ref{clm:R_n/R_k} and define $m_0=\max\bset{i(n), M_d}$. Then
$$
\omega(A\rho_{n-1}, E,D_n\setminus E)\ge\cdots\ge \frac{\chi_d\cdot m_0}{\gamma_d\bb{-\ker d(\eps(\rho_n))+ m_0^2}}\ge \frac{c}{\sqrt{-\ker d(\eps(\rho_n))}}.
$$
\section{A subharmonic function- the proof of Theorem \ref{thm:function}\ref{thm:functionB}}\label{sec:func}
In this section we will construct a subharmonic function, for which the set $\bset{u\le 0}$ is $(\eps,R)$-recurrent and 
$$
\log M_u(\rho):=\log\bb{\underset{\bset{\abs x=\rho}}\max\; u(x)}\lesssim \integrate 1\rho{\varphi(t)}t,
$$
thus proving Theorem \ref{thm:function}\ref{thm:functionB}.
\subsection{The function:}
Let $R_0(t),\eps_0(t)$ be two functions that will be chosen at the end of the proof. Similarly to Subsection \ref{subsec:not}, we define the sequence
$$
\rho_0:=0,\; \rho_{n+1}=\rho_n+R_0\bb{\rho_n},
$$
and let $r_0$ be so that $R_0(t)\le \frac t2$ for all $t\ge r_0$. For every $k\in\N,\; \rho_k\ge r_0$, let $\Lambda_k$ be a collection of points on $2\rho_kS^{d-1}$, the sphere of radius $2\rho_k$, satisfying
\begin{enumerate}
 \item For every $\mu\neq\lambda\in\Lambda_k,\;\; B(\lambda,R_0(\abs\lambda)+1)\cap B(\mu,R_0(\abs\mu)+1)=\emptyset$.
 \item $2\rho_kS^{d-1}\subset\underset{\lambda\in\Lambda_k}\bigcup B(\lambda,4R_0(\abs\lambda))$.
\end{enumerate}
We allow freedom to choose this collection, and it is clear that such a collection exists. Let $\Lambda:=\underset{ k, \rho_k\ge r_0}\bigcup\Lambda_k$ for $\Lambda_k$ defined above, and define the function:
$$
v(x)=\exp\bb{C\integrate 1{\abs x }{\varphi(t)}t},
$$
where the constant $C>0$ will be chosen later. Note that this function is radial, and 
\begin{eqnarray*}
\Delta v(x)&=&\frac{\partial^2 v}{\partial r^2}(x)+\frac{d-1}{\abs x }\cdot\frac{\partial v}{\partial r}(x)\ge Cv(x)\varphi^2(\abs x )\bb{C-\frac d{dt}\bb{\frac1{\varphi}}(\abs x )}.
\end{eqnarray*}
Since $\frac1{\varphi}$ is bounded, for large enough $C$ the function $v$ is subharmonic.
For every $\lambda\in\Lambda$ we let $v_\lambda(\xi)=v(\lambda+R_0(\abs\lambda)\cdot \xi)$, and define the function
$$
w(x)=		\begin{cases}
		P_{B}v_\lambda\bb{\frac{x-\lambda}{R_0(\abs\lambda)}}+A_\lambda\cdot \widetilde {\ker d}\bb{\frac{\abs{x-\lambda} }{ R_0(\abs\lambda) }}&, \lambda\in\Lambda\text{ and } x\in B(\lambda, R_0(\abs\lambda) )\\
		v(x)&, \text{ otherwise}
		\end{cases},
$$
where $P_B$ denotes Poisson integral of the set $B=B(0,1)=\bset{x,\abs x<1}$ and 
$$
\widetilde {\ker d}:=\begin{cases}
		\ker d(x)&, d=2\\
		1+\ker d(x)&, d\ge 3
		\end{cases}.
$$
This function should satisfy two conditions:
\begin{enumerate}
\item For every $\lambda$, $x\in B(\lambda, R_0(\abs\lambda) \cdot \eps_0(\abs\lambda) )\Rightarrow w(x)\le 0$.
\item $w$ is subharmonic.
\end{enumerate}
\paragraph{The first condition:}~\\
To address the first condition, let $\rho$ denote the maximal radius where $w|_{B(\lambda,\rho)}\le 0$. Using the maximum principle:
\begin{eqnarray*}
\underset{\abs{\lambda-x} =\rho}\max\;w(x)&=&\underset{\abs{\lambda-x} =\rho}\max\; P_{B}v_\lambda\bb{\frac{x-\lambda}{R_0(\abs\lambda)}}+A_\lambda\widetilde {\ker d}\bb{\frac{\abs{\lambda-x} }{ R_0(\abs\lambda) }}\\
&\le& \underset{\abs \xi=1}\max\; v_\lambda(\xi)+A_\lambda\widetilde {\ker d} \bb{\frac\rho{ R_0(\abs\lambda) }}= v\bb{\abs \lambda +R_0(\abs\lambda) }+A_\lambda\widetilde {\ker d} \bb{\frac\rho{ R_0(\abs\lambda) }}\le 0\\
&\iff &\frac{\rho}{R_0(\abs\lambda)}\le \bb{\widetilde {\ker d}}\inv\bb{-\frac{v\bb{\abs \lambda +R_0(\abs\lambda)}}{A_\lambda}}.
\end{eqnarray*}
\paragraph{The second condition:}~\\
To address the subharmonicity condition, we note that by the way we defined the function $w$, it is subharmonic on $\R^d\setminus\underset{\lambda\in\Lambda}\bigcup\partial B\bb{\lambda, R_0(\abs\lambda) }$. To show it is subharmonic in $\R^d$ we will use without proof a glueing argument, followed by  Poisson-Jenssen's formula:
\begin{claim}\label{clm:glueing}
Let $\Omega\subseteq\R^d$ be a domain, and let $\Omega_1\Subset\Omega$ be a subdomain, so that $\partial\Omega_1$ is an orientable, smooth, $d-1$ manifold. Every function $u$ which is continuous on $\Omega$ and subharmonic on $\Omega\setminus\partial\Omega_1$, is subharmonic on $\Omega$ if on $\partial\Omega_1$ it satisfies
$$
\frac{\partial u}{\partial n_1}\le \frac{\partial u}{\partial n_2},
$$
where $n_1$ is the outer normal to $\Omega_1$ along $\partial\Omega_1$ and $n_2$ is the outer normal to $\Omega\setminus\Omega_1$ along $\partial\Omega_1$. 
\end{claim}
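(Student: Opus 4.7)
The plan is to verify the sub-mean inequality for $u$ at every point of $\Omega$, which is equivalent to subharmonicity. Since $u$ is already subharmonic on the open set $\Omega\setminus\partial\Omega_1$, the inequality is automatic away from $\partial\Omega_1$; it therefore suffices to check it at an arbitrary $x_0\in\partial\Omega_1$. Fix a small ball $B=B(x_0,r)\Subset\Omega$ and let $h$ be the harmonic function on $B$ with boundary data $h|_{\partial B}=u|_{\partial B}$. Set $v:=u-h$: this is subharmonic on each of the two disjoint open pieces $B\cap\Omega_1$ and $B\cap(\Omega\setminus\overline{\Omega_1})$, continuous on $\overline B$, and identically zero on $\partial B$.

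Next I would apply the maximum principle separately on each piece: any strictly positive maximum of $v$ on $\overline B$ must then be attained at some $y_0\in B\cap\partial\Omega_1$. Suppose such a $y_0$ exists. Since $\partial\Omega_1$ is a smooth orientable hypersurface, the interior ball condition holds at $y_0$ from both sides, so Hopf's boundary point lemma applies to the subharmonic function $v$ on $B\cap\Omega_1$ and on $B\cap(\Omega\setminus\overline{\Omega_1})$ separately, yielding the strict one-sided inequalities $\frac{\partial v}{\partial n_1}(y_0)>0$ and $\frac{\partial v}{\partial n_2}(y_0)>0$. Since $h$ is smooth and $n_2=-n_1$, the derivatives of $h$ satisfy $\frac{\partial h}{\partial n_1}+\frac{\partial h}{\partial n_2}=0$, so the two strict inequalities for $v$ translate into a strict inequality between the one-sided normal derivatives of $u$ at $y_0$ that contradicts the hypothesis on $\partial\Omega_1$. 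Hence no positive maximum exists, $v\le 0$ on $\overline B$, and $u(x_0)\le h(x_0)=\frac{1}{|\partial B|}\int_{\partial B}u\,dS$, the required sub-mean inequality.

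The main obstacle is regularity: Hopf's lemma requires $u$ to be $C^1$ up to $\partial\Omega_1$ from each side, so that the one-sided normal derivatives appearing in the hypothesis exist classically. The statement implicitly assumes this by writing those derivatives, and in the paper's application $u$ is given by explicit Poisson integrals and radial kernels, so smoothness up to the interface is automatic on both sides. Keeping careful track of which side each derivative is computed from, and of the antipodal relation $n_2=-n_1$, is the secondary bookkeeping concern. A fully equivalent alternative that avoids Hopf is the distributional route: split $\int_\Omega u\,\Delta\varphi\,dx$ for a test function $\varphi\ge 0$ into integrals over $\Omega_1$ and $\Omega\setminus\overline{\Omega_1}$, apply Green's second identity on each piece, observe that the $u\,\partial_n\varphi$ boundary terms cancel by continuity of $u$ and opposite orientation of the normals, and sign the remaining surface contribution $\int_{\partial\Omega_1}\varphi\,(\partial_{n_1}u+\partial_{n_2}u)\,dS$ using the hypothesis, concluding that $\Delta u\ge 0$ as a distribution.
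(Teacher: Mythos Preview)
The paper states this claim \emph{without proof} (``we will use without proof a glueing argument''), so there is no authorial argument to compare against. Both routes you outline---the Hopf/maximum-principle argument and the distributional computation via Green's second identity---are correct and are the standard ways to establish such glueing lemmas.

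On your flagged bookkeeping concern: the paper's own application of the claim (immediately after its statement) shows that $\frac{\partial u}{\partial n_2}$ is intended as the one-sided derivative in the direction of $n_1$ computed from the $\Omega\setminus\overline{\Omega_1}$ side, not as a derivative in the literal direction $n_2=-n_1$. With that reading the hypothesis is precisely the jump condition $\partial_{n_1} u\big|_{\mathrm{inside}}\le \partial_{n_1} u\big|_{\mathrm{outside}}$, and your Hopf step at a putative positive maximum $y_0$ of $v$ yields $\partial_{n_1} v\big|_{\mathrm{inside}}>0$ and $\partial_{n_1} v\big|_{\mathrm{outside}}<0$; since $h$ is smooth across $\partial\Omega_1$ this gives $\partial_{n_1} u\big|_{\mathrm{inside}}>\partial_{n_1} u\big|_{\mathrm{outside}}$ at $y_0$, the desired contradiction. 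Equivalently, in the distributional version the surface term is $-\int_{\partial\Omega_1}\varphi\bigl(\partial_{n_1} u|_{\mathrm{inside}}-\partial_{n_1} u|_{\mathrm{outside}}\bigr)\,dS\ge 0$. All of this holds under the $C^1$-up-to-the-interface regularity you correctly note is implicit in the statement and is satisfied in the paper's applications.
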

We will use this claim with $\Omega=\R^d$, and $\Omega_1=B(\lambda,R_0(\abs\lambda))$, where $\frac{\partial w}{\partial n_2}$ is known. To 
bound $\frac{\partial w}{\partial n_1}$ we will use Poisson-Jenssen's formula. The problem is then reduced to showing that $\forall\xi\in S^{d-1}$:
\begin{eqnarray*}
&&\frac{\partial w}{\partial n_1}(\lambda+R_0(\abs\lambda)\xi)=\limit r {1^-}\frac{v\bb{\lambda+ R_0(\abs\lambda) \xi}-P_{B}v_\lambda(r \xi)}{1-r}+A_\lambda\;\max\bset{1,d-2}\\
&&=\frac{\partial v}{\partial n}\bb{\lambda+ R_0(\abs\lambda) \xi}+\limit r {1^-}\frac{G_{B}\bb{r \xi}}{1-r}+A_\lambda\;\max\bset{1,d-2}\\
&&=\frac{\partial v}{\partial n}\bb{\lambda+ R_0(\abs\lambda) \xi}-\frac{\partial G_{B}}{\partial r}\bb{\xi}+A_\lambda\;\max\bset{1,d-2}
\le \frac{\partial w}{\partial n_2}(\lambda+R_0(\abs\lambda)\xi)=\frac{\partial v}{\partial n}\bb{\lambda+ R_0(\abs\lambda) \xi},
\end{eqnarray*}
where if $g_B$ denote Green's function for the unit ball, $B=B(0,1)$, then
$$
G_{B}(\xi):=c_d\integrate {B}{}{g_{B}(\xi,y)\Delta v_\lambda(y)}m_d(y), \;  m_d \text{ is Lebegue's measure in }\R^d.
$$
We conclude that in order to show subharmonicity, it is enough to restrict $A_\lambda$ so that
$$
A_\lambda\max\bset{1,d-2}\le \frac{\partial G_{B}}{\partial r}\bb{\xi},\;\forall\xi\in S^{d-1}.
$$
To bound $\frac{\partial G_{B}}{\partial r}\bb{\xi}$ from bellow, note that since the collection $\bset{\frac{\partial g_{B}}{\partial n}\bb{\xi,\cdot}, \xi\in S^{d-1}}$ is uniformly integrable, we may swap the integral and the derivative to obtain
$$
\frac{\partial G_{B}}{\partial r}\bb{\xi}=\frac{\partial}{\partial r}\bb{c_d\integrate {B}{}{g_{B}(\xi,y)\Delta v_\lambda(y)}m_d(y)}=c_d\integrate {B}{}{\frac{\partial g_{B}(\cdot,y)}{\partial r}(\xi)\Delta v_\lambda(y)}m_d(y).
$$
Now for every $y\in B(0,1)$,
\begin{eqnarray*}
\Delta v_\lambda(y)&=&Cv(\lambda+R_0(\abs\lambda) y)\varphi^2(\abs {\lambda+R_0(\abs\lambda) y} )R_0(\abs\lambda)^2\cdot\bb{C-\frac d{dy}\bb{\frac1{\varphi}}(\abs {\lambda+R_0(\abs\lambda) y} )}\\
&&\ge\frac{C^2}2v(\abs \lambda -R_0(\abs\lambda) )R_0(\abs\lambda)^2\varphi^2(\abs \lambda +R_0(\abs\lambda)),
\end{eqnarray*}
if one chooses $C$ so that $\frac d{dt}\bb{\frac1{\varphi}}<\frac C2$. Then
\begin{eqnarray*}
\frac{\partial G_{B}}{\partial r}\bb{\xi}&\ge&\cdots\ge\frac{C^2}2v(\abs \lambda -R_0(\abs\lambda) )R_0(\abs\lambda)^2\varphi^2(\abs \lambda +R_0(\abs\lambda))c_d\integrate {B}{}{\frac{\partial g_{B}(\cdot,y)}{\partial r}(\xi)}m_d(y)\\
&\gtrsim& C^2\cdot\frac{v(\abs \lambda + R_0(\abs\lambda) )}{-\ker d(\eps(\abs\lambda) )}\cdot \exp\bb{\frac{-\Theta(1)C}{\sqrt{-\ker d\bb{ \eps(\abs\lambda) }}}},
\end{eqnarray*}
\subsection{Choosing $A_\lambda$.}
In order to get a maximal radius $r_\lambda$ where $w|_{B(\lambda,r_\lambda)}\le0$, we need to choose $A_\lambda$ to be as big as possible, so that $w$ is subharmonic, i.e
$$
A_\lambda\max\bset{d-2,1}= A_1\cdot\frac{C^2v(\abs \lambda + R_0(\abs\lambda) )}{-\ker d(\eps(\abs\lambda) )}\exp\bb{\frac{-A_2C}{\sqrt{-\ker d\bb{ \eps(\abs\lambda) }}}},
$$
for suitable constants $A_1,A_2$. We then get that the maximal radius where $w|_{B(\lambda,r_\lambda)}\le 0$ is
\begin{eqnarray*}
\frac{r_\lambda}{ R_0(\abs\lambda) }&=&\bb{\widetilde {\ker d}}\inv\bb{-\frac{v\bb{\abs \lambda +R_0(\abs\lambda) }}{A_\lambda}}\\
&=&\bb{\widetilde {\ker d}}\inv\bb{\exp\bb{\frac{A_2C}{\sqrt{-\ker d\bb{ \eps_0(\abs\lambda) }}}}\frac{\max\bset{1,d-2}\ker d(\eps_0(\abs\lambda) )}{A_1C^2}}\\
&\ge&\bb{ \ker d}\inv\bb{\ker d(\eps_0(\abs\lambda) )\bb{\frac{\max\bset{1,d-2}\exp\bb{\frac{A_2C}{\sqrt{-\ker d\bb{ \eps_0(\abs\lambda) }}}}}{A_1C^2}-\frac1{\ker d(\eps_0(\abs\lambda) )}}}.
\end{eqnarray*}
Now, remember that $\ker d,\;\ker d\inv$ are monotone increasing functions, and $\ker d(t)<0$ for $t<1$. Since $\eps_0(t)<1$ for all $t\in(0,\infty)$, then $\ker d(\eps_0(\abs\lambda))<0$ and so
\begin{eqnarray*}
\frac{r_\lambda}{ R_0(\abs\lambda) }&\ge&\cdots\ge\bb{ \ker d}\inv\bb{\ker d(\eps_0(\abs\lambda) )\bb{\frac{\max\bset{1,d-2}\exp\bb{\frac{A_2C}{\sqrt{-\ker d\bb{ \eps_0(\abs\lambda) }}}}}{A_1C^2}-\frac1{\ker d(\eps_0(\abs\lambda) )}}}\\
&\ge&\bb{ \ker d}\inv\bb{\ker d(\eps_0(\abs\lambda) )\cdot\frac{\max\bset{1,d-2}\exp\bb{\frac{A_2C}{\sqrt{-\ker d\bb{ \eps_0(\abs\lambda) }}}}}{A_1C^2}}\\
&\ge&\bb{ \ker d}\inv\bb{\ker d(\eps_0(\abs\lambda) )}=\eps_0(\abs\lambda),
\end{eqnarray*}
as long as
\begin{eqnarray*}
\frac{\max\bset{1,d-2}\exp\bb{\frac{A_2C}{\sqrt{-\ker d\bb{ \eps_0(\abs\lambda) }}}}}{A_1C^2}\le1,
\end{eqnarray*}
which holds for every $\abs\lambda $ large enough, depending on $C$, and the constants $A_1,\; A_2$.

We will conclude the proof in two steps: Let $r_1\ge r_0$ be so that for every $\abs\lambda\ge r_1$ the above holds, and $v|_{ r_1\cdot S^{d-1}}$ is radial, or in other words, $\underset k\min\; \abs{ r_1-2\rho_k}-\tilde R(\rho_k)>0$.\\
{\bf Step 1:} Let $\sigma_d=\min\bset{\frac12,Vol_d(B(0,1))}$, and define the functions
$$
s(x)=\cosh\bb{\frac{\pi\sqrt{d-1}}{\sigma_dR(0)}\cdot x_1}\prodit j 2 d \cos\bb{\frac{\pi x_j}{\sigma_dR(0)}}\indic{\bset{\abs{x_j}<\frac{\sigma_d\cdot R(0)}2}}(x),
$$
and 
$$
v_1(x)=	\begin{cases}
			\max\bset{C_1\widetilde {\ker d}\bb{\frac{\abs x }{ r_1-\sigma_dR_0(0)}},s(x)}&, \abs x< r_1\\
			C_1\widetilde {\ker d}\bb{\frac{\abs x }{ r_1-\sigma_dR_0(0)}}&, \abs x\ge r_1
		\end{cases},
$$
where $C_1$ is chosen so that
$$
C_1\widetilde {\ker d}\bb{\frac{r_1}{ r_1-\sigma_dR_0(0)}}\ge e^{r_1}\ge s(x),\;\text{ for all }\abs x=r_1.
$$
This function is subharmonic as a maximum between subharmonic functions, and $v_1(0)\ge s(0)=1$.\\
{\bf Step 2:} Define the function
$$
u(x):=	\begin{cases}
		v_1(x)&, \abs x \le r_1\\
		C_2\cdot w(x)-C_3&,\text{otherwise}
		\end{cases}
$$
where $C_2$ is chosen so that 
\begin{eqnarray*}
\left.\frac{\partial u}{\partial r^-}\right|_{\bset{\abs x = r_1}}&=&C_2\cdot \left.\frac{\partial w}{\partial r}\right|_{\bset{\abs x = r_1}}\ge\frac{C_1\cdot \max\bset{1,d-2}\cdot \bb{r_1-\sigma_dR_0(0)}^{d-2}}{ r_1^{d-1}}\\
&=&\left.\frac{\partial }{\partial r}\bb{C_1\ker d\bb{\frac{\abs x }{ r_1-\sigma_dR_0(0)}}}\right|_{\bset{\abs x =r_1}}=\left.\frac{\partial }{\partial r}\bb{v_1(x)}\right|_{\bset{\abs x =r_1}}=\left.\frac{\partial }{\partial r^+}\bb{u(x)}\right|_{\bset{\abs x =r_1}},
\end{eqnarray*}
while $C_3$ is chosen so that $\bb{C_2\cdot v(x)-C_3}|_{\bset{\abs x = r_1}}=v_1(x)|_{\bset{\abs x=r_1}}$, which is possible since on $ r_1\cdot S^{d-1}$ both functions are radial and strictly increasing. Following using Claim \ref{clm:glueing}, $u(x)$ is a subharmonic function, while $u(0)\ge s(0)=1$.
\subsection{Choosing the functions $R_0,\;\eps_0$}
To conclude the proof it is left to choose the functions $R_0,\;\eps_0$ so that for every $x\in\R^d$ we have 
$$
\frac{m_d\bb{B(x,R(\abs x ))\cap Z_u}}{m_d\bb{B(x, R(\abs x ))}}>m_d(B(x,\eps(\abs x ))).
$$
\begin{center}
\includegraphics[width=0.5\textwidth]{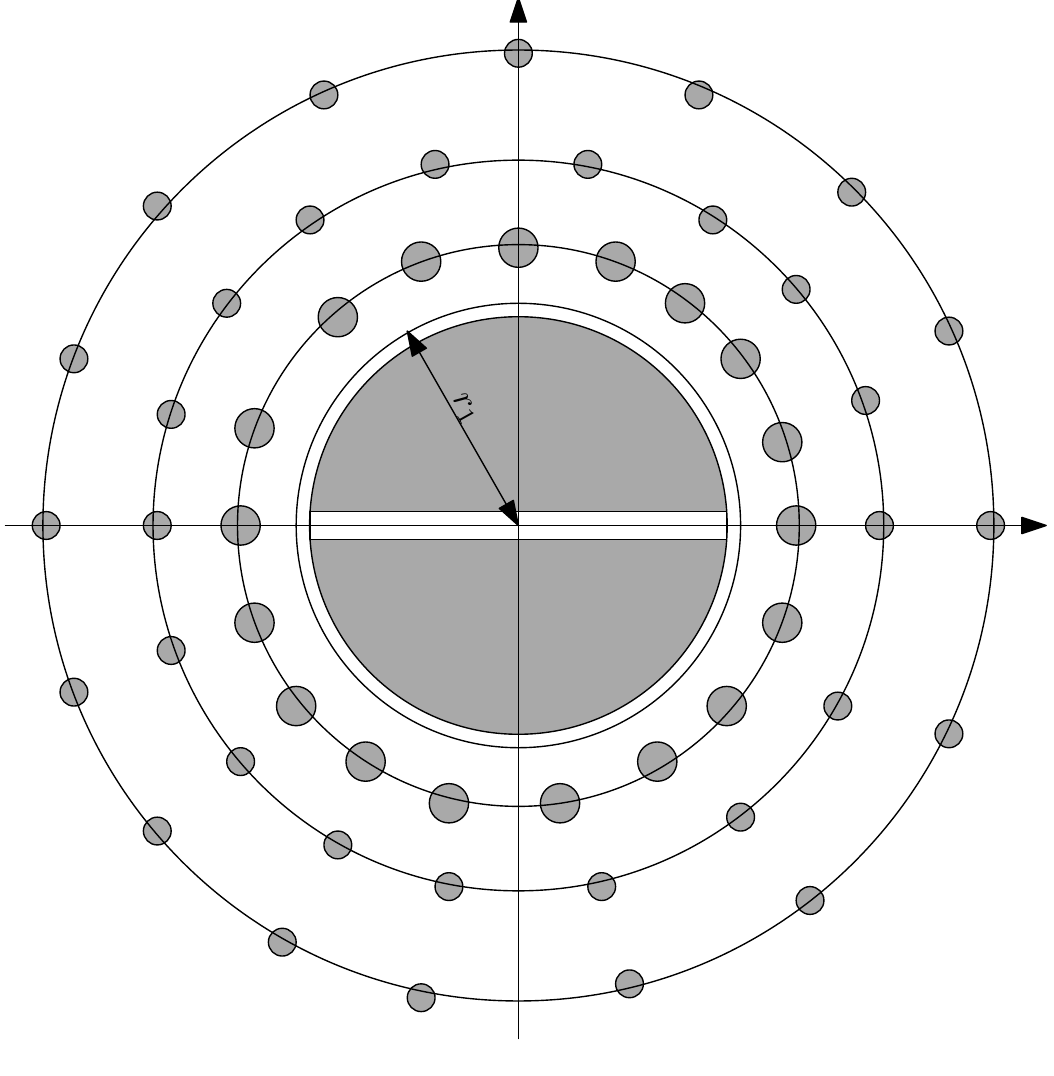}
\captionof{figure}{The zero set of $u$: The grey area is where $\bset{u= 0}$.}
\end{center}

Let $x\in\R^d$. If $\abs x<r_1$, then as long as $R_0(t)\le R(t)$,
\begin{eqnarray*}
\frac{m_d\bb{B(x,R(\abs x ))\cap Z_u}}{m_d\bb{B(x, R(\abs x ))}}&\ge& \frac{m_d(B(x,R(\abs x))-diam(B(x,R(\abs x))\cdot\bb{\sigma_dR(0)}^{d-1}}{m_d\bb{B(x, R(\abs x ))}}\\
&\ge&1-\frac12\cdot\bb{\frac{R(0)}{R(\abs x)}}^{d-1}>\frac12>\eps(\abs x).
\end{eqnarray*}
Otherwise, let $k$ be so that $2\rho_k\le\abs x<2\rho_{k+1}$, then since $\partial\bb{2\rho_kS^{d-1}}\subset\underset{\lambda\in\Lambda_k}\bigcup B(\lambda,4R(\abs\lambda))$,
$$
dist(x,\Lambda_k)\le 4R_0(\rho_k)+2\bb{\rho_{k+1}-\rho_k}=6R_0(\rho_k),
$$
and there exists $\lambda\in\Lambda_k$ so that $dist(x,\Lambda_k)\le\abs{x-\lambda}<6R_0(\rho_k)$. By inclusion
\begin{eqnarray*}
\frac{m_d\bb{B(x,R(\abs x ))\cap Z_u}}{m_d\bb{B(x, R(\abs x ))}}&\ge& \frac{m_d\bb{B(\lambda,R(\abs x)-6R_0(\rho_k))\cap Z_u}}{m_d\bb{B(x, R(\abs x ))}}\\
=\frac{m_d\bb{B(\lambda,R_0(\rho_k))\cap Z_u}}{m_d\bb{B(\lambda, R_0(\abs\lambda ))}}\cdot\frac{m_d\bb{B(\lambda, R_0(\abs\lambda ))}}{m_d\bb{B(x, R(\abs x ))}}&\ge& {m_d\bb{B(\lambda, \eps_0(\abs\lambda ))}}\bb{\frac{R_0(\rho_k)}{R(\rho_{k+1})}}^d\ge m_d\bb{B(x, \eps(\abs x ))},
\end{eqnarray*}
if we let $R_0(t)=\frac{R(t)}7$ and $\eps_0(t)=7\eps\bb{\frac t2}>\eps(t)$.

The function $u$ then satisfies all the requirements of Theorem \ref{thm:function}\ref{thm:functionB}, concluding the proof.
\section{Appendix- the proof of Proposition \ref{prop:layers}}
In order to prove this proposition, we will need the following Observation:
\begin{obs}\label{obs:independent}
Let $D$ be a domain, $F\subset D$ be some set, and let $D'\subset D$ be a subdomain. Define $F':=D'\cap F$. Then for every $x\in D'$:
\begin{eqnarray*}
\underset{y\in\partial D'}\inf\;\omega\bb{y,\partial D;D\setminus F}\le\frac{\omega\bb{x,\partial D;D\setminus F}}{\omega\bb{x,\partial D';D'\setminus F'}}\le\underset{y\in\partial D'}\sup\;\omega\bb{y,\partial D;D\setminus F}.
\end{eqnarray*}
\end{obs}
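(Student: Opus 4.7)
The plan is to treat $h(x) := \omega(x,\partial D;D\setminus F)$ as the unique bounded harmonic function on $D\setminus F$ with boundary values $1$ on $\partial D$ and $0$ on $F$, and then represent $h$ on the smaller domain $D'\setminus F'$ via its harmonic measure there. The key point is that $D'\setminus F'\subset D\setminus F$, so $h$ is still harmonic on $D'\setminus F'$, and the boundary of $D'\setminus F'$ splits naturally into the piece coming from $\partial D'$ and the piece $F'=F\cap D'$ on which $h\equiv 0$.

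First I would invoke the Poisson-type representation (or equivalently the strong Markov property applied at the exit time from $D'\setminus F'$): for every $x\in D'\setminus F'$,
\[
h(x)=\int_{\partial(D'\setminus F')} h(y)\,d\omega(x,y;D'\setminus F').
\]
Since $h$ vanishes identically on $F'$, the contribution of the $F'$-part of the boundary is zero, so this collapses to
\[
h(x)=\int_{\partial D'} h(y)\,d\omega(x,y;D'\setminus F').
\]

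Now the total mass of $d\omega(x,\,\cdot\,;D'\setminus F')$ restricted to $\partial D'$ is precisely $\omega(x,\partial D';D'\setminus F')$, so the elementary bound
\[
\inf_{y\in\partial D'} h(y)\cdot\omega(x,\partial D';D'\setminus F')\ \le\ h(x)\ \le\ \sup_{y\in\partial D'} h(y)\cdot\omega(x,\partial D';D'\setminus F')
\]
holds, and dividing by $\omega(x,\partial D';D'\setminus F')$ yields the stated double inequality after recalling $h(y)=\omega(y,\partial D;D\setminus F)$.

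The only real subtlety, and likely the main obstacle to a careful write-up, is justifying the representation formula when $\partial D'$ is not assumed smooth and when $\partial D'$ may touch $F$ or $\partial D$. This is resolved either by a standard regularization (exhaust $D'\setminus F'$ by nicer subdomains and pass to the limit using the monotone behaviour of harmonic measure under inclusion) or, more cleanly, by the probabilistic interpretation: starting Brownian motion at $x$, conditioning on the first exit from $D'\setminus F'$, and applying the strong Markov property to the trajectory from that exit point onward. On the $F'$-part of the exit the post-exit probability of ever hitting $\partial D$ before $F$ vanishes, which is precisely why $h$ drops out there. Everything else is immediate.
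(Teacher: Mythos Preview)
Your argument is correct and is essentially the same as the paper's. The only cosmetic difference is that the paper phrases the comparison via the maximum principle---checking that $m\cdot\omega(\cdot,\partial D';D'\setminus F')\le \omega(\cdot,\partial D;D\setminus F)\le M\cdot\omega(\cdot,\partial D';D'\setminus F')$ holds on $\partial D'$ (where $\omega(\cdot,\partial D';D'\setminus F')=1$) and on $F'$ (where everything vanishes), then pushes it inside---whereas you write out the equivalent integral representation against $d\omega(x,\cdot;D'\setminus F')$ and bound the integrand; these are the same argument in two standard guises.
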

\begin{proof}
let
$$
m=\underset{y\in\partial D'}\inf\;\omega\bb{y,\partial D;D\setminus F},\;\; M=\underset{y\in\partial D'}\sup\;\omega\bb{y,\partial D;D\setminus F},
$$
then for every $x\in\partial D'$
$$
m\le\omega\bb{x,\partial D;D\setminus F}\le M.
$$
Since on $\partial D'$ we have $\omega\bb{x,\partial D';D'\setminus F'}=1$, we conclude that
$$
m\cdot \omega\bb{x,\partial D';D'\setminus F'}\le\omega\bb{x,\partial D;D\setminus F}\le M\cdot \omega\bb{x,\partial D';D'\setminus F'}
$$
for every $x\in\partial D'$. Next, for every $x\in F'$ we know that the above inequality holds as well since all the components in the inequality are zero. Then the inequality holds for every $x\in\partial\bb{D'\setminus F}$ and following the maximum principle for every  $x\in D'$.
\end{proof}
\paragraph{Proof of Proposition \ref{prop:layers}}
\begin{proof}
We will use the observation recursively beginning with the sets:
$$
F=V,\; D=D_{n+1},\;\; D'=D_n.
$$
As these sets satisfy the assumptions of Observation \ref{obs:independent}, 
\begin{eqnarray*}
\underset{x\in\partial D_n}\inf\;\omega\bb{x,\partial D_{n+1};D_{n+1}\setminus V}\le \frac{\omega\bb{0,\partial D_{n+1} ;D_{n+1} \setminus V}}{\omega\bb{0,\partial D_n;D_n\setminus V}}\le \underset{x\in\partial D_n}\sup\;\omega\bb{x,\partial D_{n+1};D_{n+1}\setminus V}.
\end{eqnarray*}
We continue to apply Observation \ref{obs:independent} over and over with the sets 
$$
D=D_{n-k+1},\; D'=D_{n-k}, F=V\cap D_{n-k+1}.
$$
Doing so recursively, we obtain the lower bound:
\begin{eqnarray*}
\omega\bb{0,\partial D_{n+1} ;D_{n+1} \setminus V}&\ge&\prodit k 1 {n}	\bb{1-\underset{x\in\partial D_k}\sup\;\omega\bb{x,V;D_{k+1}\setminus V}}\\
&=&\exp\bb{\sumit k 1 n \log\bb{1-\underset{x\in\partial D_k}\sup\;\omega\bb{x,V;D_{k+1}\setminus V}}}\\
&\ge&\exp\bb{-\alpha\sumit k 1 n \underset{x\in\partial D_k}\sup\;\omega\bb{x,V;D_{k+1}\setminus V}},
\end{eqnarray*}
since $\log(1-\eta)\ge-\eta\cdot \alpha$ for $\eta\in\sbb{0,1-\frac 1\alpha}$.\\
A similar computation shows that:
\begin{eqnarray*}
\omega\bb{0,\partial D_{n+1} ;D_{n+1} \setminus V}&\le&\cdots\le\exp\bb{-\sumit k 1 n \underset{x\in\partial D_k}\inf\;\omega\bb{x,V;D_{k+1}\setminus V}}.
\end{eqnarray*}
\end{proof}

\addcontentsline{toc}{section}{References}
\nocite{*}
\bibliographystyle{plain}
\bibliography{References}

\bigskip
\noindent A.G.:
Mathematical and Computational Sciences, University of Toronto Mississauga, Canada.
\newline{\tt adiglucksam@gmail.com} 

\end{document}